\numberwithin{equation}{section}
\newtheorem{theorem}{Theorem}[section]
\newtheorem{lemma}{Lemma}[section]
\newtheorem{proposition}{Proposition}[section]
\newtheorem{remark}{Remark}[section]
\newtheorem{definition}{Definition}[section]
\journal{}
\begin{document}

\begin{frontmatter}



\title{Existence of multiple periodic solutions to a semilinear  wave equation with $x$-dependent coefficients
}


\author[1]{Hui Wei}
\ead{weihui01@163.com}
\author[1,2]{Shuguan Ji\corref{cor}}
\ead{jishuguan@hotmail.com}
\address[1]{School of Mathematics and Statistics and Center for Mathematics and Interdisciplinary Sciences, Northeast Normal University, Changchun 130024, P.R. China}
\address[2]{School of Mathematics, Jilin University, Changchun 130012, P.R. China}
\cortext[cor]{Corresponding author.}

\begin{abstract}
This paper is concerned with the periodic (in time) solutions to an one-dimensional semilinear wave equation with $x$-dependent coefficient. Such a model arises from the forced vibrations
of a nonhomogeneous string and propagation of seismic waves in nonisotropic media. By combining variational methods with saddle point reduction technique, we obtain the existence of at
least three periodic solutions whenever the period is a rational multiple of the length of the spatial interval. Our method is based on a delicate analysis for the asymptotic character of
the spectrum of the wave operator with $x$-dependent coefficients, and the spectral properties play an essential role in the proof.
\end{abstract}

\begin{keyword}
existence,  periodic solutions, wave equation
\end{keyword}

\end{frontmatter}


\section{Introduction}

In this paper, we consider the existence of multiple periodic solutions to the semilinear wave equation with $x$-dependent coefficients
\begin{equation}
\label{eqa:1-1}
\rho(x) u_{tt} -  (\rho(x)u_x)_x = a\rho(x) u + f(t, x, u), \,  t\in \mathbb{R},\, 0<x<\pi,
\end{equation}
with the Dirichlet boundary conditions
\begin{equation}
\label{eqa:1-2}
u(t, 0) = u(t, \pi) = 0, \, t\in \mathbb{R},
\end{equation}
and the periodic conditions
\begin{equation}
\label{eqa:1-3}
u(t+T, x) = u(t, x), \ u_t(t+T, x) = u_t(t, x),  \, t\in \mathbb{R}, \, 0<x<\pi,
\end{equation}
where $a>0$  is a constant and $f$ is a given $T$-periodic function in time $t$.

Equation (\ref{eqa:1-1}) originates from the forced vibrations of a bounded nonhomogeneous string and the propagation of seismic waves in nonisotropic media
(see e.g. \cite{Barbu.(1996), Barbu.(1997)a, Barbu.(1997)b, Ji.(2008), Ji.(2009), Ji.(2006), Ji.(2007), Ji.(2011), Ru04, Ru17} and references therein).
More precisely, the vertical displacement $u(t, z)$ at time $t$ and depth $z$ of a plane seismic wave is described by the equation
\begin{equation}
\label{eqa:1-4}
\mu(z) u_{tt} -  (\nu(z)u_z)_z = 0
\end{equation}
with some initial conditions in $t$ and boundary conditions in $z$, where $\mu(z)$ is the rock density and $\nu(z)$ is the elasticity coefficient.
By the change of variable
$$x =\int_0^z \left(\frac{\mu(s)}{\nu(s)}\right)^{1/2} \textrm{d}s,$$
equation (\ref{eqa:1-4}) is transformed into
$$\rho(x) u_{tt} -  (\rho(x)u_x)_x=0,$$
where $\rho=(\mu\nu)^{1/2}$ is called the acoustic impedance function.

It is well known that the case of $\rho\equiv C\neq 0$ (a nonzero constant) corresponds to the classical wave equation, which is called the one with constant coefficients for
distinguishing it from the one with $x$-dependent coefficients discussed here. The problem of finding periodic solutions of nonlinear wave equation with constant coefficients
has received a great deal of attention since the original work \cite{R.(1967)} of Rabinowitz. By using the variational methods, he obtained the existence of periodic solutions
for the weakly nonlinear homogeneous string whenever the period $T$ is a rational multiple of the length of the spatial interval. Thereafter, many authors, such as  Br\'{e}zis,
Chang, Nirenberg etc., have used and developed the variational methods, topological degree and index theory to obtain a lot of results on the existence and multiplicity of periodic solutions for
the problem with various nonlinearities (see e.g. \cite{Amann.(1979), B.(1983), B.(1978), Chang.(1981), C.1982, Chen.(2014), Chen.(2016), Chen.(2017), Craig.(1993), I.2000, R.(1978), R.(1984), T.(2006)} and the
references therein).


On the other hand,  the problem of finding periodic solutions for the nonlinear wave equation with $x$-dependent coefficients was studied by Barbu and Pavel  in \cite{Barbu.(1996), Barbu.(1997)a, Barbu.(1997)b} for the first time. In \cite{Barbu.(1997)a}, Barbu and Pavel considered the existence and regularity of periodic solutions for such wave equation with sublinear nonlinearity under the Dirichlet boundary conditions. For the case the nonlinear
term having power-law growth, Rudakov \cite{Ru04} proved the existence of periodic solutions under the Dirichlet boundary conditions. Later, Ji and his collaborators obtained some related results for the general Sturm-Liouville
boundary value problem \cite{Ji.(2008), Ji.(2006)}, and periodic and anti-periodic boundary value problem \cite{Ji.(2009), Ji.(2007)}.
In \cite{W.2009}, by using topological degree methods, Wang and An obtained an existence result on periodic solution of the problem with resonance and the sublinear nonlinearity. Afterwards, Ji and Li \cite{Ji.(2011)} obtained an existence result of periodic solution for $\eta_{\rho}(x)=0$ under the Dirichlet boundary conditions, which actually solves an open problem posted in \cite{Barbu.(1997)a}. Recently,  Ji et al. \cite{Ji.(2016)} obtain the existence and multiplicity of periodic solutions for the Dirichlet-Neumann boundary value problem of a wave equation with $x$-dependent coefficients
by using the Leray-Schauder degree theory. The restriction to such type of boundary value problem is essentially due to the possible loss of the compactness of the inverse operator on its range.

In this paper, we intend to pay close attention to the existence of multiple periodic solutions of wave equation with $x$-dependent coefficients. By combining variational methods with saddle point reduction technique, we obtain the existence of at least three periodic solutions whenever the period is a rational multiple of the length of the spatial interval. Our method is based on a delicate analysis for the asymptotic character of
the spectrum of the wave operator with $x$-dependent coefficients, and the spectral properties play an essential role in the proof.

Denote $\tilde{f}(t, x, u) = \frac{f(t, x, u)}{\rho(x)}$. Throughout this paper, we assume  $T$ is a rational multiple of $\pi$ which can be rewritten as
 $$T = 2\pi \frac{p}{q}$$
for some relatively prime positive integers $p$ and $q$. Moreover, we make the following assumptions:\\
(A1) $\rho(x) \in H^2(0, \pi)$ satisfies $\rho(x)>0$ for $x\in[0,\pi]$,  and
$$\rho_0 = \textrm{ess}\inf \eta_{\rho}(x) >0,$$
where
$$\eta_{\rho}(x) = \frac{1}{2} \frac{\rho''}{\rho}-\frac{1}{4} \left(\frac{\rho'}{\rho}\right)^2.$$
(A2) $\tilde{f}(t, x, u) \in C^1(\mathbb{R}\times (0, \pi) \times\mathbb{R})$,  $\tilde{f}(t+T, x, u) = \tilde{f}(t, x, u)$, and
\begin{eqnarray}
\tilde{f}(t, x, u)  = o(|u|), \ \ \textrm{as} \ u\rightarrow 0  \ \ \textrm{uniformly in} \ (t, x),
\label{eqa:1.2}
\end{eqnarray}
and $\tilde{f}(t, x, u)$ is asymptotically linear in $u$ at $\infty$ in the following sense: there exists a constant $b >0 $ such that
\begin{eqnarray}
\tilde{f}(t, x, u) - b u = o(|u|), \ \ \textrm{as} \ |u|\rightarrow \infty  \ \ \textrm{uniformly in} \ (t, x).
\label{eqa:1.3}
\end{eqnarray}

The rest of this paper is organized as follows. In Sect. \ref{sec:2}, we give some preliminaries and state the main result.
In Sect. \ref{sec:3}, we first characterize the solutions of problem \eqref{eqa:1-1}--\eqref{eqa:1-3} as the critical points of the corresponding variational problem. Then, with the aid of the saddle point reduction technique, we reduce the critical point of the variation problem from an infinite dimensional space to a finite dimensional subspace. In Sect. \ref{sec:4}, we prove reduction functional satisfies the $(PS)_c$ condition for any $c \in \mathbb{R}$. In
Sect. \ref{sec:5} and Sect. \ref{sec:6}, we devote to the proof of the bounds of reduction functional and the main result respectively.

\section{Preliminaries and main result}

\setcounter{equation}{0}
\label{sec:2}

Set $\Omega = (0, T) \times (0, \pi)$, and denote
\begin{equation*}
\Psi = \{\psi \in C^\infty(\Omega) : \psi(t,0) = \psi(t,\pi) = 0, \psi(0,x) = \psi(T,x), \psi_t(0,x) = \psi_t(T,x)\},
\end{equation*}
and
$$L^r(\Omega) = \Big\{ u: \|u\|^r_{L^r(\Omega)} = \int_\Omega |u(t,x)|^r \rho(x) \textrm{d}t \textrm{d}x <\infty\Big\}, \, \, r\geq 1.$$
It is well known that $\Psi$ is dense in $L^r(\Omega)$ for any $r\geq 1$, and $L^2(\Omega)$ is a Hilbert space with the inner product
$$\langle u, v \rangle = \int_\Omega u(t,x)  \overline{v(t,x)} \rho(x)\textrm{d}t \textrm{d}x, \ \forall u, v \in L^2(\Omega).$$

We rewrite (\ref{eqa:1-1})--(\ref{eqa:1-3}) on $\Omega$ in the
 following form
 \begin{eqnarray}
\label{eqa:2-1}
 && \rho(x)u_{tt}-(\rho(x)u_{x})_{x}=a\rho(x) u +f(t,x,u), \ \
 (t,x)\in \Omega,\\
\label{eqa:2-2}
&&u(t,0)=u(t,\pi)=0, \ \  t\in(0,T), \\
\label{eqa:2-3}
 &&u(0,x)=u(T,x),\ \ u_{t}(0,x)=u_{t}(T,x),\ \
x\in(0,\pi).
 \end{eqnarray}

\begin{definition}
A function $u \in L^r(\Omega)$ is called a weak solution of problem \eqref{eqa:2-1}--\eqref{eqa:2-3} if it satisfies
$$\int_\Omega u(\rho\psi_{tt} - (\rho\psi_x)_x){\rm d}t {\rm d}x - \int_\Omega (a u + \tilde{f}(t, x, u))\psi \rho{\rm d}t {\rm d}x = 0, \ \  \forall \psi \in \Psi. $$
\end{definition}

Define the linear operator $L_0$ by
$$L_0 \psi = \rho^{-1}\left(\rho \psi_{tt} - (\rho \psi_x)_x\right), \ \forall \psi \in \Psi,$$
and denote its extension in $L^2(\Omega)$ by $L$. It is known that $L$ is a selfadjoint operator (see \cite{Barbu.(1997)a}), and $u\in L^2(\Omega)$ is a weak solution of problem (\ref{eqa:2-1})--(\ref{eqa:2-3}) if and only if $Lu=au+\tilde{f}$.

For the study of periodic solutions of problem (\ref{eqa:2-1})--(\ref{eqa:2-3}), we need to use the following
complete orthonormal system of eigenfunctions $\{\phi_m(t)\varphi_n(x) : m\in \mathbb{Z}, n\in \mathbb{N}\}$ in $L^2(\Omega)$ (see \cite{Y.(1980)}), where
\begin{equation*} \label{eqa:2.1}
\phi_m(t) = T^{-\frac{1}{2}} e^{i\mu_m t}, \ \mu_m = 2m\pi T^{-1}, \ m \in \mathbb{Z},
\end{equation*}
and $\lambda_n$, $\varphi_n(x)$ are given by the Sturm-Liouville problem
\begin{equation}\label{eqa:2.2}
-(\rho(x)\varphi'_n(x))' = \lambda^2_n \rho(x) \varphi_n(x), \ \varphi_n(0)=\varphi_n(\pi)=0, \ n\in \mathbb{N}.
\end{equation}

\begin{lemma}[\cite{Barbu.(1997)a}] \label{lem:2.1}
Assume that $\rho(x)$ satisfies (A1), then
the eigenvalues of problem \eqref{eqa:2.2} have the form
$$\lambda_n = n +\theta_n \ with \ \theta_n \rightarrow 0 \ as \ n \rightarrow \infty,$$
where
\begin{equation}\label{eqa:2.3}
0<\frac{\rho_2}{n} \leq \sqrt{n^2 + \rho_0}-n \leq \theta_n \leq \sqrt{n^2 + \rho_1}-n \leq \frac{\rho_1}{2n},
\end{equation}
and $\rho_1 = \frac{2}{\pi} \int^{\pi}_0 \eta_{\rho}(x) \textrm{d}x$, $\rho_2 = \sqrt{\rho_0 +1} -1$.
\end{lemma}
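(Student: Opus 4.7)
The plan is first to reduce the weighted Sturm--Liouville problem \eqref{eqa:2.2} to a canonical Schr\"odinger form via the Liouville transformation $w_n(x) := \sqrt{\rho(x)}\,\varphi_n(x)$. A direct computation shows $(\rho\varphi_n')' = \sqrt{\rho}\,(w_n'' - \eta_\rho w_n)$, so $w_n$ satisfies
\[
-w_n''(x) + \eta_\rho(x)\,w_n(x) = \lambda_n^2\, w_n(x), \qquad w_n(0)=w_n(\pi)=0,
\]
with $\eta_\rho \in L^2(0,\pi)$ by $\rho \in H^2$ and $\eta_\rho \geq \rho_0 > 0$ a.e.\ by (A1). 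This turns the problem into a standard one-dimensional Schr\"odinger eigenvalue problem on $(0,\pi)$ to which both Rayleigh-quotient and Pr\"ufer-type methods apply.

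The lower bound $\lambda_n^2 \geq n^2 + \rho_0$ will follow cleanly from the Courant--Fischer min-max principle. For any $w \in H_0^1(0,\pi)$,
\[
\int_0^\pi\bigl(|w'|^2 + \eta_\rho w^2\bigr)\,dx \;\geq\; \int_0^\pi|w'|^2\,dx + \rho_0\int_0^\pi w^2\,dx,
\]
and since the Dirichlet Laplacian on $(0,\pi)$ has eigenvalues $n^2$, minimizing the maximal Rayleigh quotient over $n$-dimensional subspaces yields the desired lower estimate immediately.

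The main obstacle is the upper bound $\lambda_n^2 \leq n^2+\rho_1$, because $\eta_\rho$ need not be pointwise bounded by $\rho_1 = \frac{2}{\pi}\int_0^\pi \eta_\rho$, so a test-subspace argument with the sine basis does not suffice. My plan here is to introduce the modified Pr\"ufer transform $w_n(x) = r(x)\sin\theta(x)$, $w_n'(x) = \lambda_n\, r(x)\cos\theta(x)$, which converts the second-order equation into the first-order phase equation
\[
\theta'(x) \;=\; \lambda_n - \frac{\eta_\rho(x)}{\lambda_n}\sin^2\theta(x).
\]
The Dirichlet conditions together with the standard zero-count for the $n$-th Sturm--Liouville eigenfunction force $\theta(0)=0$ and $\theta(\pi)=n\pi$. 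Integrating the phase equation over $(0,\pi)$ then gives the key identity
\[
\lambda_n(\lambda_n - n) \;=\; \frac{1}{\pi}\int_0^\pi \eta_\rho(x)\sin^2\theta(x)\,dx \;\leq\; \frac{1}{\pi}\int_0^\pi \eta_\rho\,dx \;=\; \frac{\rho_1}{2}.
\]
Because $\eta_\rho\geq\rho_0>0$ forces $\lambda_n>n$, I conclude $\lambda_n^2 - n^2 = (\lambda_n - n)(\lambda_n + n) \leq 2\lambda_n(\lambda_n - n) \leq \rho_1$.

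Setting $\theta_n := \lambda_n - n$ and combining the two one-sided bounds yields $\sqrt{n^2+\rho_0}-n \leq \theta_n \leq \sqrt{n^2+\rho_1}-n$. The outer envelope in \eqref{eqa:2.3} is then routine: $\sqrt{n^2+\rho_1}-n = \rho_1/(\sqrt{n^2+\rho_1}+n) \leq \rho_1/(2n)$, and using the identity $\rho_0 = 2\rho_2+\rho_2^2$ (from $\rho_2 = \sqrt{1+\rho_0}-1$) one verifies $\rho_0 \geq 2\rho_2 + \rho_2^2/n^2$ for $n\geq 1$, which rearranges to $\sqrt{n^2+\rho_0}-n \geq \rho_2/n$; the decay $\theta_n \to 0$ is then automatic. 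The heart of the argument is the Pr\"ufer identity, which replaces an unavailable pointwise estimate on $\eta_\rho$ with an integral average weighted by the rapidly oscillating factor $\sin^2\theta$.
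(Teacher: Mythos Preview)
Your argument is correct. The Liouville substitution $w_n=\sqrt{\rho}\,\varphi_n$ is computed properly, the min--max comparison gives the sharp lower bound $\lambda_n^2\ge n^2+\rho_0$, the modified Pr\"ufer phase equation $\theta'=\lambda_n-\lambda_n^{-1}\eta_\rho\sin^2\theta$ is derived and integrated correctly to yield $\lambda_n(\lambda_n-n)\le\rho_1/2$, and the elementary algebra for the outer envelope and for the $\rho_2/n$ bound is sound.

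Note, however, that the present paper does \emph{not} prove Lemma~\ref{lem:2.1}: it is quoted verbatim from Barbu and Pavel \cite{Barbu.(1997)a} and used as a black box. So there is no ``paper's own proof'' to compare with here. What you have written is essentially a clean reconstruction of the Barbu--Pavel argument: their proof also passes through the Liouville normal form $-w''+\eta_\rho w=\lambda^2 w$ and obtains the two-sided estimate on $\lambda_n^2$ by comparison with the constant-potential problems $\eta_\rho\equiv\rho_0$ and the integral average. Your use of the Pr\"ufer identity to reach the upper bound $\lambda_n^2\le n^2+\rho_1$ without any pointwise bound on $\eta_\rho$ is exactly the right idea and matches the spirit of the original source.
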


By Lemma \ref{lem:2.1}, the eigenvalues of operator $L$ can be rewritten as
$$\lambda_{nm} = \lambda_n^2-\mu_m^2= p^{-2}(np + \theta_np-mq)(np + \theta_np+mq).$$
Thus,  when $np\neq |m|q$, it is easy to verify that $|\lambda_{nm}| \rightarrow \infty$ as $m, n\rightarrow  \infty$.  On the other hand, when  $np = |m|q$, by \eqref{eqa:2.3} we have
\begin{equation*}
2\rho_2\leftarrow(\frac{\rho_2}{n})^2 + 2\rho_2 \leq \lambda_{nm}= \theta_n (2n+\theta_n) \leq \rho_1+(\frac{\rho_1}{2n})^2\rightarrow \rho_1,
\end{equation*}
as $n \rightarrow \infty$.

Denote the set of eigenvalues of operator $L$ by
$$\Lambda(L)=\{\lambda_{nm}: \lambda_{nm}=\lambda_n^2-\mu_m^2\}.$$
The above statement shows that $\Lambda(L)$  has at least one accumulation point in $[2\rho_2, \rho_1]$. Therefore we have the following lemma.
\begin{lemma}\label{lem:2.a}
Let assumption $(A1)$ hold. Then\\
(i) $L$ has at least one essential spectral point, and all of them belong to $[2\rho_2, \rho_1]${\rm ;}\\
(ii) If $\lambda \in \Lambda(L)$ and $\lambda \notin [2\rho_2, \rho_1]$, then $\lambda$ is isolated and its multiplicity is finite.
\end{lemma}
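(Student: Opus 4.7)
The plan is to exploit the fact that $L$ is self-adjoint with the complete orthonormal basis $\{\phi_m\varphi_n\}$ of eigenfunctions, so that $\sigma(L) = \overline{\Lambda(L)}$ and a point lies in the essential spectrum precisely when it is either an accumulation point of $\Lambda(L)$ or an eigenvalue of infinite multiplicity. Both parts of the lemma then reduce to a combinatorial analysis of the map $(n,m)\mapsto\lambda_{nm}$: locating its accumulation points and understanding the cardinality of its fibres.

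The main technical device is to write $r_{nm} := np - mq$ and $s_{nm} := np + mq$, both integers, and use the factorization
\begin{equation*}
\lambda_{nm} = p^{-2}(r_{nm} + \theta_n p)(s_{nm} + \theta_n p)
\end{equation*}
already recorded in the excerpt. I would split into two regimes. In the \emph{resonant} regime $r_{nm}=0$ or $s_{nm}=0$, since $p,q$ are coprime this forces $n=qk$ and $|m|=pk$ for some $k\in\mathbb{N}$, whence $\lambda_{nm}=\theta_n(2n+\theta_n)$; by Lemma \ref{lem:2.1} and \eqref{eqa:2.3} this lies in $[2\rho_2+(\rho_2/n)^2,\ \rho_1+(\rho_1/(2n))^2]$, so every accumulation along the resonant regime is in $[2\rho_2,\rho_1]$. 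In the \emph{non-resonant} regime $|r_{nm}|,|s_{nm}|\geq 1$, and once $n$ is large enough that $\theta_np<1/2$ each factor has magnitude at least $1/2$; hence any bound $|\lambda_{nm}|\leq M$ forces $|r_{nm}s_{nm}|\leq 4Mp^2$, and since $2np = r_{nm}+s_{nm}$ and $2mq = s_{nm}-r_{nm}$, this in turn forces $n$ and $|m|$ to lie in a finite set. Consequently, along any non-resonant sequence with $n+|m|\to\infty$ we must have $|\lambda_{nm}|\to\infty$.

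Part (i) is then immediate. For existence, the resonant sequence $\lambda_{qk,pk}$ lies in a compact subset of $[2\rho_2,\rho_1]$, so it has a convergent subsequence whose limit is an accumulation point of $\Lambda(L)$ and therefore belongs to $\sigma_{ess}(L)\cap[2\rho_2,\rho_1]$. Conversely, any $\mu\in\sigma_{ess}(L)$ is approximated by eigenvalues along an unbounded index sequence $(n_k,m_k)$ (distinct eigenvalues in the accumulation case, infinitely many indices producing $\mu$ in the infinite-multiplicity case); the non-resonant estimate rules out all but finitely many such indices, so $(n_k,m_k)$ is eventually resonant and $\mu\in[2\rho_2,\rho_1]$. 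Part (ii) is the symmetric observation: for $\lambda\in\Lambda(L)\setminus[2\rho_2,\rho_1]$, both the fibre $\{(n,m):\lambda_{nm}=\lambda\}$ and every sequence of distinct eigenvalues tending to $\lambda$ would be unbounded if infinite, and the dichotomy would then force $\lambda$ back into $[2\rho_2,\rho_1]$; hence the multiplicity is finite and $\lambda$ is isolated in $\Lambda(L)$.

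The delicate step is the non-resonant estimate: it hinges on combining the integrality of $r_{nm}$ and $s_{nm}$ (which supplies the uniform lower bound $1/2$ on each factor) with the decay $\theta_n\to 0$ (which absorbs the non-integer perturbation). Without integrality — that is, without $p,q$ being coprime integers so that $np\pm mq\in\mathbb{Z}$ — near-resonant pairs could produce spurious accumulation points outside $[2\rho_2,\rho_1]$, and the clean classification of the essential spectrum would break down.
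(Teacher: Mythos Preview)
Your argument is correct and follows exactly the resonant/non-resonant split the paper uses in the paragraph immediately preceding the lemma; the paper itself gives no formal proof beyond that paragraph and the line ``Therefore we have the following lemma.'' Your write-up simply supplies the details the paper leaves implicit---the integrality lower bound $|r_{nm}+\theta_np|,|s_{nm}+\theta_np|\geq 1/2$ in the non-resonant regime and the finiteness of nonzero integer pairs $(r,s)$ with $|rs|$ bounded---so nothing substantive needs to change.
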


If $(a, b) \cap \Lambda(L)\neq \emptyset$ and $b$  satisfies $\rho_1 < a < b$, by Lemma \ref{lem:2.a}, we can define
$$b^- = \max\{ \lambda \in \Lambda(L) :\lambda < b\}, \ \ b^+ = \min\{ \lambda \in \Lambda(L) :\lambda > b\}.$$
It is obvious that $b^- < b < b^+$.

Denote $e = b^+ -a$. The main result of this paper is as follows.
\begin{theorem}\label{th:2.1}
  Assume $a$, $b \notin \Lambda(L)$, $\rho_1<a<b^+ -b$, $(a, b) \cap \Lambda(L)\neq \emptyset$, and (A1)--(A2) hold. If $\tilde{f}$ is increasing in $u$ and there exists a constant $\kappa > 0$ such that
$$\frac{\partial \tilde{f}}{\partial u}(t, x, u) \leq e - \kappa,  \quad \forall (t, x, u) \in \mathbb{R}\times (0, \pi) \times\mathbb{R}.$$
Then the problem \eqref{eqa:2-1}--\eqref{eqa:2-3} has at least three T-periodic solutions.
\label{the:2.2}
\end{theorem}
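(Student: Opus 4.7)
The plan is to recast problem (\ref{eqa:2-1})--(\ref{eqa:2-3}) as a critical point problem for the functional
\[
J(u) = \tfrac{1}{2}\langle(L-a)u,u\rangle - \int_\Omega F(t,x,u)\,\rho(x)\,dt\,dx, \qquad F(t,x,u)=\int_0^u \tilde f(t,x,s)\,ds,
\]
on $L^2(\Omega)$, whose critical points are precisely the weak solutions. By Lemma~\ref{lem:2.a}, the essential spectrum of $L$ lies inside $[2\rho_2,\rho_1]$; since $a>\rho_1$ and $a,b\notin\Lambda(L)$, both $a$ and $b^+$ belong to the resolvent set, so $L^2(\Omega)$ decomposes orthogonally as $H_-\oplus H_0\oplus H_+$ via the spectral projections of $L$ associated respectively with $(-\infty,a)$, $(a,b^+)$, and $(b^+,+\infty)$. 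Because $(a,b^+)$ lies strictly above the essential spectrum and contains only isolated eigenvalues of finite total multiplicity, $H_0$ is \emph{finite-dimensional}.

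I would then set up the saddle-point reduction. The monotonicity of $\tilde f$ together with $\partial_u\tilde f\le e-\kappa$ yields $0\le \partial_u\tilde f\le b^+-a-\kappa$, so the Hessian $(L-a)-\partial_u\tilde f(\cdot,\cdot,u)$ satisfies $\ge\kappa\,I$ on $H_+$ and $\le-\alpha\,I$ on $H_-$ (with $\alpha = a-\sup\{\lambda\in\sigma(L):\lambda<a\}>0$), uniformly in $u$. Consequently, for each $v\in H_0$, $w\mapsto J(v+w)$ on $H_-\oplus H_+$ is strictly concave on $H_-$ and strictly convex on $H_+$ with uniform spectral gaps; it therefore admits a unique critical point $\psi(v)$, which depends smoothly on $v$ by the implicit function theorem. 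The reduced functional $\tilde J(v):=J(v+\psi(v))$ on the finite-dimensional $H_0$ is $C^1$, has the same critical values, and its critical points correspond bijectively to those of $J$. Sections~\ref{sec:4} and~\ref{sec:5} supply the $(PS)_c$ condition and the growth bounds required for $\tilde J$.

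The last step is to produce at least three critical points of $\tilde J$. Assumption (\ref{eqa:1.2}) forces $\tilde f(\cdot,\cdot,0)=0$ and $\partial_u\tilde f(\cdot,\cdot,0)=0$, hence $v=0$ is a critical point, and the Hessian $\tilde J''(0)=(L-a)|_{H_0}$ is positive definite because every eigenvalue of $L$ in $H_0$ strictly exceeds $a$; thus $0$ is a non-degenerate local minimum with critical groups $C_q(\tilde J,0)\cong \delta_{q,0}\,\mathbb{Z}$. For the behaviour at infinity, (\ref{eqa:1.3}) makes $J$ asymptotic to $J_\infty(u):=\tfrac{1}{2}\langle(L-a-b)u,u\rangle$; the condition $a+b<b^+$ together with $a,b\notin\Lambda(L)$ ensures that $L-a-b$ is invertible, and $H_0$ splits as $H_0^-\oplus H_0^+$ according to whether the corresponding eigenvalue of $L$ lies in $(a,b)$ or $(b,b^+)$. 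Since $(a,b)\cap\Lambda(L)\neq\emptyset$, $\dim H_0^-\ge 1$, and a standard comparison identifies the critical groups at infinity as $C_q(\tilde J,\infty)\cong\delta_{q,\dim H_0^-}\,\mathbb{Z}$. As the Morse index of $\tilde J$ at $0$ is $0$ while its Morse index at infinity is $\ge 1$, a Morse-theoretic three critical point theorem for asymptotically linear problems (in the spirit of Liu's local linking argument) produces two non-trivial critical points, giving at least three $T$-periodic solutions.

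The principal technical hurdle I anticipate is the critical group computation at infinity for the reduced functional: one must show that, after saddle-point reduction, the sub-quadratic remainder produced by (\ref{eqa:1.3}) does not alter the finite-dimensional homotopy type of large sublevel sets, so that $\tilde J$ and the reduced form of $J_\infty$ share the same critical groups at $\infty$. The uniform spectral gaps furnished by $0\le\partial_u\tilde f\le e-\kappa$ are essential here, both for the reduction itself and for excluding any asymptotic resonance that would spoil the index computation.
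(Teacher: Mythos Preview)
Your reduction step is essentially the paper's: the same orthogonal splitting by the spectral projections of $L$ at the thresholds $a$ and $b$ (note that your interval $(b^{+},+\infty)$ for $H_{+}$ omits the eigenvalue $b^{+}$ itself; the paper takes $E_{3}=\{\lambda_{nm}>b\}$, which coincides with $\{\lambda_{nm}\ge b^{+}\}$, and your $H_{0}^{+}$ is automatically $\{0\}$ since by definition of $b^{+}$ there are no eigenvalues in $(b,b^{+})$), the same concavity/convexity bounds coming from $0\le\partial_{u}\tilde f\le e-\kappa$ (Lemma~\ref{lem:3.3}), and the same appeal to the abstract saddle-point Lemma~\ref{lem:3.a}. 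The functional should live on the weighted space $E$ rather than on $L^{2}(\Omega)$, but this is cosmetic.

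Where you genuinely diverge from the paper is in the endgame on the finite-dimensional block. The paper never invokes critical groups: it shows directly (Lemmas~\ref{lem:5.1}--\ref{lem:5.3}) that the reduced functional $\widetilde\Phi$ is bounded above with $\widetilde\Phi(u)\le 0$ for $\|u\|_{E}$ large, that $\widetilde\Phi(0)\le 0$, and that $\widetilde\Phi\ge\tau_{2}>0$ on a small sphere $\|u\|_{E}=R_{2}$. From these it extracts a global maximiser $u_{1}$, a local minimiser $u_{2}\in B_{R_{2}}$, and then a mountain-pass point $u_{3}$, with an explicit path argument to separate $u_{3}$ from $u_{1}$. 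Your route---observing that $0$ is a nondegenerate local minimum (Morse index $0$) while $\widetilde\Phi$ is asymptotic to the negative-definite quadratic $\tfrac12\langle(L-a-b)\,\cdot,\cdot\rangle$ on $E_{2}$ (Morse index $\dim E_{2}\ge 1$ at infinity)---also works: an isolated local maximum on $\mathbb{R}^{n}$ has critical groups $\delta_{q,n}\mathbb{Z}$, so the Morse relation $1+t^{n}=t^{n}+(1+t)Q(t)$ is impossible with $Q\ge 0$, forcing a third critical point. Your argument is cleaner conceptually but leans on the critical-group machinery; the paper's is more hands-on and self-contained, avoiding any homological input beyond the classical mountain-pass lemma.
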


\section{Variational problem and its reduction}
\label{sec:3}

 In what follows, we always assume $a$, $b$ satisfy the conditions in Theorem \ref{th:2.1}. Since $a \notin \Lambda(L)$ and $a >\rho_1$, then  there exists a constant $\delta > 0$ such that
\begin{eqnarray}\label{eqa:2.4}
|\lambda_{nm} - a|\geq \delta >0, \ m \in \mathbb{Z}, n\in \mathbb{N}.
\end{eqnarray}

We define the working space
$$E= \Big\{u \in L^2(\Omega):  \|u\|^2_E = \sum\limits_{n,m}|\lambda_{nm} - a| |\alpha_{nm}|^2 < \infty\Big\},$$
where $\alpha_{nm}$ denote the Fourier coefficients of $u\in L^2(\Omega)$, i.e.,
$$u=\sum_{n, m}\alpha_{nm}\varphi_n(x)\phi_m(t), \ \ \alpha_{nm}=\int_{\Omega}u\varphi_n\overline{\phi}_m\rho{\rm d}x{\rm d}t.$$
The estimate \eqref{eqa:2.4} indicates  $\|\cdot\|_E$ is a norm. Furthermore, $E$ is a Hilbert space equipped with the inner product $\langle u, v\rangle_0 = \sum\limits_{n, m}|\lambda_{nm} - a| \alpha_{nm} \overline{\beta}_{nm}$, where $\alpha_{nm}$ and $\beta_{nm}$ are the Fourier coefficients of $u$ and $v$ respectively.

From \eqref{eqa:2.4}, we have
\begin{eqnarray}\label{eqa:2.7}
\|u\|^2_{L^2(\Omega)} = \sum\limits_{n,m} |\alpha_{nm}|^2 \leq \delta^{-1} \sum\limits_{n,m}|\lambda_{nm} - a| |\alpha_{nm}|^2 = \delta^{-1} \|u\|^2_E,
\end{eqnarray}
which implies that the continuous embedding $E \hookrightarrow L^2(\Omega)$. Moreover, for $1 \leq r \leq 2$, the continuous embedding $L^2(\Omega) \hookrightarrow L^r(\Omega)$ implies that there exists a constant $C=C(r)$ such that
\begin{eqnarray}\label{eqa:2.8}
\|u\|_{L^r(\Omega)} \leq C \|u\|_E, \ \ 1\leq r\leq 2.
\end{eqnarray}

Since $a \in (\rho_1, b^+ -b)$, then
\begin{eqnarray}\label{eqa:2.5}
e = b^+ -a>b.
\end{eqnarray}

If $\lambda_{nm} > b$, we have
\begin{eqnarray}\label{eqa:2.6}
\lambda_{nm}-a\geq e>b,  \ m \in \mathbb{Z}, n\in \mathbb{N}.
\end{eqnarray}

Define the energy functional
\begin{eqnarray}\label{eqa:2.9}
\Phi(u)  = \frac{1}{2}\langle(L-a)u, u\rangle - \int_\Omega \widetilde{F}(t,x, u) \rho\textrm{d}t \textrm{d}x, \ \ \forall u  \in E,
\end{eqnarray}
where $\widetilde{F}(t,x, u) = \int^u_0 \tilde{f}(t,x, s) \textrm{d}s$. In addition, by \eqref{eqa:1.2} and the assumption on $\tilde{f}$ in Theorem \ref{th:2.1} , it is easy to see $\widetilde{F}(t,x, u) \geq 0$ for any $u \in E$.

Consequently, $\Phi$ is a $C^1$ functional on $E$ and
\begin{eqnarray}\label{eqa:2.10}
\langle \Phi'(u), v \rangle = \langle(L-a)u, v\rangle - \int_\Omega \tilde{f}(t,x, u)v \rho\textrm{d}t \textrm{d}x, \ \ \forall u, \ v\in E.
\end{eqnarray}
Then $u$ is a weak solution of problem \eqref{eqa:2-1}--\eqref{eqa:2-3} if and only if $\Phi'( u) = 0$. Therefore, solutions of problem \eqref{eqa:2-1}--\eqref{eqa:2-3} are characterized as critical points of the functional $\Phi$. Since $\tilde{f}$ is $C^1$, then $\Phi$ is a $C^2$ functional on $E$, and
\begin{equation*}
\langle \Phi''(u)w, v \rangle = \langle (L-a)w, v\rangle - \int_\Omega \frac{\partial \tilde{f}}{\partial u}(t,x, u)v w \rho\textrm{d}t \textrm{d}x, \ \ \forall u, \ v, \ w \in E.
\end{equation*}
In particular,
\begin{equation}\label{eqa:2.11}
\langle \Phi''(u)v, v \rangle = \langle (L-a)v, v\rangle - \int_\Omega \frac{\partial \tilde{f}}{\partial u}(t,x, u)v^2 \rho\textrm{d}t \textrm{d}x, \ \ \forall  u, \ v\in E.
\end{equation}

 It's not difficult to see that $\Phi$ is neither bounded from above nor from below, which shows that we can't obtain the critical points of $\Phi$ by a simple minimization or maximization. Here we shall prove our main result by virtue of the following saddle point reduction technique developed by Amann \cite{Amann.(1979)}, Castro and Lazer \cite{Castro.(1979)}, and Arcoya and Costa \cite{A.(1995)} etc.
\begin{lemma}\label{lem:3.a}
Let $H$ be a real Hilbert space with the norm $\|\cdot\|_H$, $\Phi \in C^1(H, \mathbb{R})$, and $H_1$, $H_2$ and $H_3$ be closed subset of $H$ such that $H =H_1\oplus H_2 \oplus H_3$. If there exists a constant $\gamma >0$ satisfying
$$\langle \Phi'( u+w+v_1)-\Phi'( u+w+v_2), v_1 -v_2 \rangle \leq -\gamma \|v_1 -v_2\|^2_H, \ \forall v_1, v_2 \in H_1, u\in H_2, w\in H_3,$$
and
$$\langle \Phi'( u+w_1+v)-\Phi'(  u+w_2+v), w_1 -w_2 \rangle \geq  \gamma \|w_1 -w_2\|^2_H, \ \forall v\in H_1, u\in H_2, w_1, w_2 \in H_3.$$
Then \\
(i) There exists a unique continuous mapping $h: H_2 \rightarrow H_1 \oplus H_3$, such that
$$\Phi(u + h(u)) = \max_{v\in H_1} \min_{w \in H_3} \Phi(u + v+w) = \min_{w \in H_3}\max_{v\in H_1}  \Phi(u + v+w);$$
(ii) Define $\widetilde{\Phi}(u) = \Phi(u + h(u))$ for any $u \in H_2$, then $\widetilde{\Phi} \in C^1(H_2, \mathbb{R})$, and
$$\langle \widetilde{\Phi}'( u), v \rangle = \langle \Phi'( u +  h(u)), v \rangle, \forall u, v\in H_2;$$
(iii) If $u \in H_2$ is a critical point of $\widetilde{\Phi}$, then $u + h(u)$ is a critical point of $\Phi$. On the other hand, if $u +v$ is a critical point of $\Phi$, then $v= h(u)$ and $u$ is a critical point of $\widetilde{\Phi}$, where $u \in H_2$, $v \in H_1 \oplus H_3${\upshape ;}\\
(iv) If $\Phi$ satisfies the Palais-Smale condition $(PS)_c$ at the level $c \in \mathbb{R}$, then the functional $\widetilde{\Phi}$ also satisfies the $(PS)_c$ condition.
\label{lem:3.1}
\end{lemma}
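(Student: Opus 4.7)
The plan is to treat this as a classical saddle-point reduction in which the two uniform monotonicity inequalities furnish the engine for every step. For each fixed $u \in H_2$ I would construct $h(u)$ by solving the stationarity equation for $\Phi$ along the $H_1 \oplus H_3$ fibre. The second hypothesis says $\Phi'$ is strongly monotone on $H_3$ with constant $\gamma$, so for fixed $v \in H_1$ the functional $w \mapsto \Phi(u + v + w)$ is strongly convex and coercive on $H_3$ and admits a unique minimizer $w(u,v)$. The first hypothesis dually makes $v \mapsto \Phi(u + v + w)$ strongly concave. Combining them, one shows that the coupled system $\Phi'(u + v + w)\!\upharpoonright_{H_1 \oplus H_3} = 0$ has a unique solution in $H_1 \times H_3$, which I name $h(u)$; Lipschitz continuity of $h$ comes from subtracting two copies of the system and testing against $v_1 - v_2$ and $w_1 - w_2$, with quantitative constants furnished by $\gamma$.

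For the minimax identity in (i), the saddle-point inequalities $\Phi(u + v + w(u,v)) \leq \Phi(u + h(u)) \leq \Phi(u + v(u,w) + w)$, valid for all $v \in H_1$ and $w \in H_3$ by strict concavity/convexity, force both iterated extrema to coincide with $\Phi(u + h(u))$. For (ii) and (iii), the decisive fact is that $\Phi'(u + h(u))$ vanishes on $H_1 \oplus H_3$ by construction. To differentiate $\widetilde{\Phi}$ at $u$ in direction $\tilde u \in H_2$, I would write
\[
\widetilde{\Phi}(u + t\tilde u) - \widetilde{\Phi}(u) = \int_0^1 \langle \Phi'(z_s),\, t\tilde u + h(u + t\tilde u) - h(u) \rangle \, ds,
\]
where $z_s$ interpolates linearly between $u + h(u)$ and $(u + t\tilde u) + h(u + t\tilde u)$. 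Because $h$ is Lipschitz, the $H_1 \oplus H_3$–component of the increment is $O(t)$, and since $\Phi'$ is continuous and vanishes on $H_1 \oplus H_3$ at the base point, pairing with it contributes only $o(t)$. This yields $\langle \widetilde{\Phi}'(u), \tilde u\rangle = \langle \Phi'(u + h(u)), \tilde u\rangle$, with continuity in $u$ inherited from that of $\Phi'$ and $h$. Part (iii) then follows at once: if $\widetilde{\Phi}'(u) = 0$ then $\Phi'(u + h(u))$ vanishes on $H_2$ and, by construction, on $H_1 \oplus H_3$, hence on $H$; conversely, a critical point of $\Phi$ must realize the unique saddle on each fibre, so $v = h(u)$ and $u$ is critical for $\widetilde{\Phi}$.

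For (iv), given $\{u_n\} \subset H_2$ with $\widetilde{\Phi}(u_n) \to c$ and $\widetilde{\Phi}'(u_n) \to 0$, set $z_n = u_n + h(u_n)$. Then $\Phi(z_n) \to c$, while $\Phi'(z_n) \to 0$ in $H^{\ast}$ because it is zero on $H_1 \oplus H_3$ and equals $\widetilde{\Phi}'(u_n)$ on $H_2$; the $(PS)_c$ hypothesis for $\Phi$ delivers a convergent subsequence $z_{n_k} \to z$, and the continuous projection onto $H_2$ along the direct-sum decomposition produces the required convergence of $u_{n_k}$. The main obstacle I anticipate is the differentiability step in (ii): the cross-term $\langle \Phi'(z_s), h(u + t\tilde u) - h(u)\rangle$ must be made $o(t)$, which requires the Lipschitz bound on $h$ coming from the \emph{uniform} constant $\gamma$ in the monotonicity hypotheses, not merely pointwise strict convexity/concavity. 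Without that uniformity the reduction would deliver only a continuous $h$, and $\widetilde{\Phi}$ would fail to be $C^1$.
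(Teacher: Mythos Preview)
The paper does not prove this lemma; it is stated as a known tool and attributed to Amann, Castro--Lazer, and Arcoya--Costa (references \cite{Amann.(1979)}, \cite{Castro.(1979)}, \cite{A.(1995)} in the paper). Your sketch follows precisely the standard route of those sources: build $h(u)$ from the uniform strong concavity/convexity, extract Lipschitz continuity of $h$ from the constant $\gamma$, differentiate $\widetilde\Phi$ via the integral mean-value identity using that $\Phi'(u+h(u))$ annihilates $H_1\oplus H_3$, and lift $(PS)_c$ sequences to $H$ and project back. There is nothing to compare against in the paper itself, and your outline is correct.
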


 By an observation of Lemma \ref{lem:3.a}, it needs to decompose the working space $E$ into suitable orthogonal subspaces.
 Noting $a$, $b \notin \Lambda(L)$ and $a >\rho_1$,  the working space $E$ can be decomposed into the following orthogonal subspaces
\begin{displaymath}
\begin{array}{lll}
E_1= \Big\{u \in E:  \lambda_{nm}<a,  \ m \in \mathbb{Z}, n\in \mathbb{N} \Big\},\\
E_2= \Big\{u \in E:  a<\lambda_{nm}<b, \ m \in \mathbb{Z}, n\in \mathbb{N}\Big\},\\
E_3= \Big\{u \in E:  \lambda_{nm}>b, \ m \in \mathbb{Z}, n\in \mathbb{N}\Big\}.
\end{array}
\end{displaymath}

Thus we write $E = E_1\oplus E_2 \oplus E_3$. Moreover, by $(a, b) \cap \Lambda(L) \neq \emptyset$ and Lemma \ref{lem:2.a}, we have $E_2 \neq \emptyset$ and $\dim(E_2) < \infty$.

For any $u\in E_1$, we write $u = \sum\limits_{\lambda_{nm}<a} \alpha_{nm} \varphi_n\phi_m$, then
\begin{equation}
\langle (L - a)u, u \rangle= - \sum\limits_{\lambda_{nm}<a} |\lambda_{nm} - a| |\alpha_{nm}|^2  = -\|u\|^2_E.
\label{eqa:3.1}
\end{equation}
For any $u \in E_2\oplus E_3$, by a similar calculation we have
\begin{equation}
\langle (L - a)u, u \rangle = \|u\|^2_E.
\label{eqa:3.2}
\end{equation}

With the help of \eqref{eqa:3.1} and \eqref{eqa:3.2}, we have the following lemma.
\begin{lemma}\label{lem:3.3}
Let the assumptions in Theorem {\upshape\ref{th:2.1}} hold. Then there exists a constant $\gamma > 0$ such that
$$ \langle \Phi'( u+v)-\Phi'( u+w), v -w \rangle \leq -\gamma \|v -w\|^2_E, \ \forall  u\in E_2\oplus E_3, v, w \in E_1,$$
and
$$ \langle \Phi'( u+v)-\Phi'( u+w), v -w \rangle \geq \gamma \|v -w\|^2_E, \ \forall u\in E_1\oplus E_2,  v,  w \in E_3.$$
\end{lemma}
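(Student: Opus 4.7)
The plan is to write out the difference $\Phi'(u+v) - \Phi'(u+w)$ using \eqref{eqa:2.10}, pair it with $v-w$, and then treat the quadratic (spectral) part and the nonlinear (integral) part separately. Specifically, for either assertion, from \eqref{eqa:2.10} we get
\begin{equation*}
\langle \Phi'(u+v) - \Phi'(u+w), v-w\rangle = \langle (L-a)(v-w), v-w\rangle - \int_\Omega \bigl[\tilde{f}(t,x,u+v)-\tilde{f}(t,x,u+w)\bigr](v-w)\rho\,\textrm{d}t\,\textrm{d}x.
\end{equation*}
Since $\tilde{f}\in C^1$ by (A2), the mean value theorem gives $\tilde{f}(t,x,u+v)-\tilde{f}(t,x,u+w) = \frac{\partial \tilde{f}}{\partial u}(t,x,\xi)(v-w)$ for some intermediate value $\xi=\xi(t,x)$; the $u$ in the left-hand side will then play no further role, only the difference $v-w$ matters.

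For the first inequality, where $v,w\in E_1$, the difference $v-w$ also lies in $E_1$, so \eqref{eqa:3.1} gives $\langle (L-a)(v-w),v-w\rangle = -\|v-w\|_E^2$. The monotonicity hypothesis on $\tilde f$ in Theorem~\ref{th:2.1} forces $\frac{\partial \tilde{f}}{\partial u}\ge 0$, so the integral term contributes a nonnegative quantity that enters with a minus sign. Thus the whole expression is already $\le -\|v-w\|_E^2$, which yields the first estimate with $\gamma\le 1$; no upper bound on $\partial_u\tilde f$ is needed here.

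For the second inequality, where $v,w\in E_3$, the difference $v-w$ lies in $E_3\subset E_2\oplus E_3$, so \eqref{eqa:3.2} now gives $\langle (L-a)(v-w),v-w\rangle = +\|v-w\|_E^2$. The main step is to absorb the integral term into a fraction of $\|v-w\|_E^2$. Using $\frac{\partial \tilde{f}}{\partial u}\le e-\kappa$ we bound
\begin{equation*}
\int_\Omega \frac{\partial \tilde{f}}{\partial u}(t,x,\xi)(v-w)^2\rho\,\textrm{d}t\,\textrm{d}x \le (e-\kappa)\|v-w\|_{L^2(\Omega)}^2.
\end{equation*}
Here is where the spectral gap \eqref{eqa:2.6} is essential: for $v-w\in E_3$ every Fourier mode has $\lambda_{nm}-a\ge e$, hence by Parseval $\|v-w\|_{L^2(\Omega)}^2 \le e^{-1}\|v-w\|_E^2$. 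Putting the pieces together,
\begin{equation*}
\langle \Phi'(u+v) - \Phi'(u+w), v-w\rangle \ge \|v-w\|_E^2 - \tfrac{e-\kappa}{e}\|v-w\|_E^2 = \tfrac{\kappa}{e}\|v-w\|_E^2,
\end{equation*}
and taking $\gamma = \min\{1, \kappa/e\}$ delivers both inequalities simultaneously.

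The only delicate point is the second case: one must match the constants so that the $L^2$-integral can be absorbed into the $E$-norm with a strictly positive remainder. That is why the hypotheses are designed so that $a\in(\rho_1,b^+-b)$ (giving $e=b^+-a>b$ and thus the gap $\lambda_{nm}-a\ge e$ on $E_3$) and so that $\partial_u\tilde f\le e-\kappa$ is strictly below $e$. Beyond verifying these constants line up, the remaining work is purely mechanical expansion and use of the orthogonal decomposition.
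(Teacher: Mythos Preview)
Your proposal is correct and follows essentially the same approach as the paper: the paper writes the difference via the integral $\int_0^1 \langle \Phi''(u+w+s(v-w))(v-w),v-w\rangle\,\mathrm{d}s$ rather than the pointwise mean value theorem, but the subsequent estimates (using \eqref{eqa:3.1}, \eqref{eqa:3.2}, $0\le \partial_u\tilde f\le e-\kappa$, and the gap \eqref{eqa:2.6}) and the final choice $\gamma=\min\{1,\kappa/e\}$ are identical.
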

\begin{proof}
For any  $u$, $v$, $w \in E$ and $s \in \mathbb{R}$, we have
\begin{equation}\label{eqa:3.5}
\langle \Phi'( u+v)-\Phi'( u+w), v -w \rangle = \int^1_0  \langle \Phi''(u+w+s(v-w))(v-w),  v-w\rangle \textrm{d}s.
\end{equation}
From \eqref{eqa:2.11}, we obtain
\begin{eqnarray}\label{eqa:3.6}
&&\langle \Phi''(u+w+s(v-w))(v-w),  v-w\rangle \nonumber\\
&&= \langle (L - a )(v-w), v-w \rangle - \int_\Omega (v-w)^2\frac{\partial \tilde{f}}{\partial u}(t, x, u+w+s(v-w)) \rho\textrm{d}t \textrm{d}x.\nonumber \\
\end{eqnarray}

The assumption on $\tilde{f}$ in Theorem \ref{th:2.1} shows  $\frac{\partial \tilde{f}}{\partial u} \geq 0$. For the case $v$, $w \in E_1$, $u\in E_2\oplus E_3$,
by \eqref{eqa:3.1} we have
$$\langle \Phi'( u+v)-\Phi'( u+w), v -w \rangle \leq - \|v -w\|^2_E.$$

For the case $v$, $w \in E_3$, $u\in E_1\oplus E_2$, by $\eqref{eqa:3.2}$ we have
\begin{eqnarray}\label{eqa:3.7}
\langle (L - a)( v-w), v-w \rangle = \|v-w\|^2_E.
\end{eqnarray}
Moreover, since $0 \leq \frac{\partial \tilde{f}}{\partial u}(t, x, u) \leq e - \kappa$, with the aid of \eqref{eqa:2.6}, a direct calculation yields
\begin{equation}\label{eqa:3.8}
\int_\Omega ( v-w)^2\frac{\partial \tilde{f}}{\partial u} \rho\textrm{d}t \textrm{d}x \leq (e - \kappa ) \|v-w\|^2_{L^2(\Omega)}
\leq \frac{(e-\kappa)}{e} \|v-w\|^2_E.
\end{equation}

Inserting \eqref{eqa:3.7}, \eqref{eqa:3.8} into \eqref{eqa:3.6}, from \eqref{eqa:3.5} we have
$$\langle \Phi'( u+v)-\Phi'( u+w), v -w \rangle \geq \frac{\kappa}{e} \|v-w\|^2_E.$$
By setting $\gamma = \min\{1, \ \frac{\kappa}{e}\}$, we arrive at the assertion.\qedhere
\end{proof}

Lemma \ref{lem:3.3} verifies all the conditions in Lemma \ref{lem:3.1}. Therefore there exists a unique continuous mapping $h : E_2 \rightarrow E_1\oplus E_3$ such that
\begin{equation} \label{eqa:3.9}
\widetilde{\Phi}(u) = \Phi(u + h(u)) = \max_{v\in E_1} \min_{w \in E_3} \Phi(u + v+w)= \min_{w \in E_3}\max_{v\in E_1}  \Phi(u + v+w).
\end{equation}
Moreover, Lemma \ref{lem:3.1} shows the reduction functional $\widetilde{\Phi} \in C^1(E_2, \mathbb{R})$, and
$u \in E_2$ is a critical point of $\widetilde{\Phi}$ if and only if $u+ v+w$ is a critical point of $\Phi$, where $v\in E_1$, $w\in E_3$ and $h(u) = v+w$. Thus, the critical points of $\Phi$ on the infinite dimensional space $E$ are
transformed into the ones of $\widetilde{\Phi}$ on the finite dimensional subspace $E_2$.
In what follows, we shall apply the variational methods to obtain  critical points of the functional $\widetilde{\Phi}$ on $E_2$.

\section{Verification the $(PS)_c$ condition}

\setcounter{equation}{0}
\label{sec:4}
We will acquire the critical points of $\widetilde{\Phi}$ via variational methods, thus it needs to verify  $\widetilde{\Phi}$ satisfies $(PS)_c$ condition for any $c \in \mathbb{R}$. Furthermore, Lemma \ref{lem:3.1} shows that it suffices to verify that $\Phi$ satisfies $(PS)_c$ condition, which means, any sequence $\{u_i\} \subset E$ satisfying $\Phi (u_i) \rightarrow c$ and $\Phi' (u_i) \rightarrow 0$ as $i \rightarrow \infty$ has a convergent subsequence for any $c \in \mathbb{R}$. To this goal, we need the following lemma which provides two estimates for the quadratic forms on different subspaces of $E$.
\begin{lemma}
\label{lem:3.2}
Let $a$, $b$ satisfy the assumptions in Theorem {\upshape\ref{th:2.1}}. Then there exist $\gamma_1, \gamma_2>0$ such that
\begin{eqnarray}
&&\langle (L - a - b)u, u \rangle \leq -\gamma_1\|u\|^2_E, \ \forall u\in E_1\oplus E_2, \label{eqa:3.3}\\
&&\langle (L - a - b)u, u \rangle \geq \gamma_2\|u\|^2_E, \ \forall u\in E_3.\label{eqa:3.4}
\end{eqnarray}
\end{lemma}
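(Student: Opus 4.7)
The plan is to diagonalize $L-a-b$ in the complete orthonormal basis $\{\varphi_n\phi_m\}$ and reduce both quadratic-form inequalities to uniform scalar bounds on the spectrum $\Lambda(L)$, using the gaps supplied by the hypotheses $a,b\notin\Lambda(L)$ and $a<b^+-b$. Concretely, writing $u=\sum_{n,m}\alpha_{nm}\varphi_n\phi_m$ gives $\langle(L-a-b)u,u\rangle=\sum(\lambda_{nm}-a-b)|\alpha_{nm}|^2$ and $\|u\|_E^2=\sum|\lambda_{nm}-a||\alpha_{nm}|^2$, so \eqref{eqa:3.3} and \eqref{eqa:3.4} are equivalent to the pointwise bounds $a+b-\lambda_{nm}\geq\gamma_1|\lambda_{nm}-a|$ (on modes with $\lambda_{nm}<b$) and $\lambda_{nm}-a-b\geq\gamma_2|\lambda_{nm}-a|$ (on modes with $\lambda_{nm}>b$), holding with uniform constants.

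For \eqref{eqa:3.3} I would split the support of $u\in E_1\oplus E_2$ into two bands. If $\lambda_{nm}<a$, the target inequality rewrites as $(1-\gamma_1)(a-\lambda_{nm})+b\geq 0$, which holds for any $\gamma_1\in(0,1)$ since $a-\lambda_{nm}>0$ and $b>a>\rho_1>0$. If $a<\lambda_{nm}<b$, then by Lemma~\ref{lem:2.a}(ii) (combined with $a>\rho_1$, which keeps the relevant part of $\Lambda(L)$ outside the essential spectrum $[2\rho_2,\rho_1]$) every such eigenvalue is isolated of finite multiplicity, so $b^-=\max\{\lambda\in\Lambda(L):\lambda<b\}$ is attained, with $a<b^-<b$ because $(a,b)\cap\Lambda(L)\neq\emptyset$ and $b\notin\Lambda(L)$. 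The required inequality becomes $(1+\gamma_1)(\lambda_{nm}-a)\leq b$, so any $\gamma_1\leq(b-b^-+a)/(b^--a)$ works, and this quantity is strictly positive.

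For \eqref{eqa:3.4}, every mode of $u\in E_3$ satisfies $\lambda_{nm}\geq b^+$, which is well-defined and strictly larger than $b$ since $b\notin\Lambda(L)$ and eigenvalues above $\rho_1$ are isolated. Writing $\lambda_{nm}-a-b\geq\gamma_2(\lambda_{nm}-a)$ as $(1-\gamma_2)(\lambda_{nm}-a)\geq b$, the infimum of $\lambda_{nm}-a$ on $E_3$ equals $b^+-a=e$, and the hypothesis $a<b^+-b$ is exactly $e>b$, so $\gamma_2=(e-b)/e>0$ does the job. Taking the minimum of the two values of $\gamma_1$ found above yields the constant for \eqref{eqa:3.3}.

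The only conceptual content is tracking where each hypothesis enters: the gap $b^-<b<b^+$ relies jointly on $a,b\notin\Lambda(L)$ and the isolation statement in Lemma~\ref{lem:2.a}, while the strict inequality $e>b$ required on $E_3$ is precisely the pinching condition $a<b^+-b$ imposed in Theorem~\ref{th:2.1}. Once these gaps are in hand, the rest is elementary arithmetic on the multipliers, so I do not anticipate any substantive obstacle beyond recording the correct case split.
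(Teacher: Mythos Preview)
Your proposal is correct and follows essentially the same approach as the paper: diagonalize $L-a-b$ in the eigenbasis, split $E_1\oplus E_2$ into the bands $\lambda_{nm}<a$ and $a<\lambda_{nm}<b$, and use the spectral gaps $b^-<b<b^+$ together with $e>b$ to produce uniform constants. The paper arrives at $\gamma_1=\min\{1,\,b/b^- - 1\}$ and $\gamma_2=1-b/e$, while your computation gives the slightly sharper $\gamma_1=\min\{1,\,(b-b^-+a)/(b^--a)\}$ on the $E_2$ band, but this is cosmetic; the argument is the same.
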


\begin{proof}
On the one hand, for  $u \in E_1\oplus E_2$, we write $u = \sum\limits_{\lambda_{nm}<b} \alpha_{nm} \phi_m\varphi_n$. A direct calculation yields
\begin{eqnarray*}
&&\langle (L - a - b)u, u \rangle \\
&=& \sum\limits_{\lambda_{nm}<b} (\lambda_{nm} - a ) |\alpha_{nm}|^2 - b \sum\limits_{\lambda_{nm}<b}  |\alpha_{nm}|^2\\
&\leq& \sum\limits_{\lambda_{nm}<a} (\lambda_{nm} - a) |\alpha_{nm}|^2 + \sum\limits_{a<\lambda_{nm}<b} (\lambda_{nm} - a ) |\alpha_{nm}|^2- b \sum\limits_{a<\lambda_{nm}<b}  |\alpha_{nm}|^2.
\end{eqnarray*}
In virtue of $a<\lambda_{nm}<b$, we have $0<\lambda_{nm} - a<b^-$. Thus, we have
$$\frac{1}{b^-} \sum\limits_{a<\lambda_{nm}<b}  |\lambda_{nm} - a | |\alpha_{nm}|^2 \leq \sum\limits_{a<\lambda_{nm}<b}  |\alpha_{nm}|^2.$$
Therefore,
\begin{eqnarray*}
&&\langle (L - a - b)u, u \rangle\\
 &\leq& -\sum\limits_{\lambda_{nm}<a} |\lambda_{nm} - a | |\alpha_{nm}|^2-\left(\frac{b}{b^-}-1\right)\sum\limits_{a<\lambda_{nm}<b}  |\lambda_{nm} - a | |\alpha_{nm}|^2\\
 &\leq&-\gamma_1\|u\|^2_E,
\end{eqnarray*}
where $\gamma_1 = \min \{1,  \frac{b}{b^-} -1\}$. Noting $0<b^- < b$, it follows $\gamma_1>0$.

On the other hand, for $u\in E_3$,  we write $u = \sum\limits_{\lambda_{nm}>b} \alpha_{nm} \phi_m\varphi_n$. Observing $\lambda_{nm} > b$, from \eqref{eqa:2.6}, we obtain
$$\frac{1}{e} \sum\limits_{\lambda_{nm}>b}  |\lambda_{nm} - a | |\alpha_{nm}|^2 \geq \sum\limits_{\lambda_{nm}>b}  |\alpha_{nm}|^2.$$
Therefore,
\begin{eqnarray*}
\langle (L - a - b)u, u \rangle &=& \sum\limits_{\lambda_{nm}>b} |\lambda_{nm} - a | |\alpha_{nm}|^2 - b \sum\limits_{\lambda_{nm}>b} |\alpha_{nm}|^2\\
&\geq& \left(1-\frac{b}{e} \right) \sum\limits_{\lambda_{nm}>b} |\lambda_{nm} - a | |\alpha_{nm}|^2.
\end{eqnarray*}
Let $\gamma_2 =  1-\frac{b}{e}$, by \eqref{eqa:2.5}, it follows $\gamma_2>0$. Therefore
$$\langle (L - a - b)u, u \rangle \geq \gamma_2\|u\|^2_E.$$
The proof is completed.\qedhere
\end{proof}
\begin{lemma}\label{lem:4.1} Let the assumptions in Theorem {\upshape\ref{th:2.1}} hold. If  $\{u_i\} \subset E$ satisfies $\Phi (u_i) \rightarrow c$ and $\Phi' (u_i) \rightarrow 0$ as $i \rightarrow \infty$, then there exists a constant $\widetilde{C} > 0$ independent of $i$  such that $\|u_i\|_E \leq \widetilde{C}$.
\end{lemma}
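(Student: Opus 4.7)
My plan is to obtain a direct (contradiction-free) a priori estimate by testing $\Phi'(u_i)$ against a sign-split element adapted to the spectral decomposition $E = E_1 \oplus E_2 \oplus E_3$, and then to exploit the asymptotic linearity of $\tilde{f}$ at infinity together with the quadratic-form bounds of Lemma \ref{lem:3.2} to absorb the nonlinear term. Since $b \in (a, a + e)$ with strict inequality, the combined operator $L - a - b$ is negative definite on $E_1 \oplus E_2$ and positive definite on $E_3$, which is exactly what allows us to treat $\tilde{f}(t,x,u) \approx bu$ as a perturbation of a coercive form.

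Concretely, write $u_i = u_i^{12} + u_i^{3}$ with $u_i^{12} \in E_1 \oplus E_2$ and $u_i^{3} \in E_3$, set $v_i = u_i^{3} - u_i^{12}$, and split $\tilde{f}(t,x,u) = bu + g(t,x,u)$ where $g(t,x,u) = o(|u|)$ as $|u|\to\infty$ uniformly in $(t,x)$ by (A2). Since the subspaces $E_j$ are mutually orthogonal under the functional calculus of $L$, the cross terms drop out and
\[
\langle (L - a - b) u_i, v_i \rangle = \langle (L - a - b) u_i^{3}, u_i^{3} \rangle - \langle (L - a - b) u_i^{12}, u_i^{12} \rangle \geq \gamma \|u_i\|_E^2
\]
with $\gamma = \min\{\gamma_1, \gamma_2\} > 0$ by Lemma \ref{lem:3.2}. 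On the other hand, from the definition \eqref{eqa:2.10} of $\Phi'$, this same quantity equals $\langle \Phi'(u_i), v_i \rangle + \int_\Omega g(t,x,u_i) v_i \rho\,{\rm d}t {\rm d}x$. The first term is controlled by $\|\Phi'(u_i)\|_{E^*}\|u_i\|_E = o(\|u_i\|_E)$ as $i\to\infty$.

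For the remaining integral, the asymptotic linearity condition (A2) combined with continuity of $\tilde{f}$ and the uniform-in-$(t,x)$ convergence gives, for any $\epsilon>0$, a constant $C_\epsilon>0$ with $|g(t,x,u)| \le \epsilon|u| + C_\epsilon$ for all $(t,x,u)$. Applying \eqref{eqa:2.7} and \eqref{eqa:2.8} (with $r=1$ and $r=2$), together with $\|v_i\|_E \le \|u_i\|_E$, yields
\[
\left| \int_\Omega g(t,x,u_i) v_i \rho\,{\rm d}t {\rm d}x \right| \leq \epsilon \delta^{-1} \|u_i\|_E^2 + C_\epsilon C\|u_i\|_E.
\]
Choosing $\epsilon$ so small that $\epsilon \delta^{-1} \le \gamma/2$ and combining the inequalities gives $(\gamma/2)\|u_i\|_E^2 \leq o(\|u_i\|_E) + C'_\epsilon \|u_i\|_E$, from which $\|u_i\|_E \leq \widetilde C$ follows at once. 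The main subtlety is the uniform sublinear bound on $g$: one must use both the uniformity in $(t,x)$ in \eqref{eqa:1.3} to control large $|u|$ and the $T$-periodicity plus continuity of $\tilde{f}$ together with \eqref{eqa:1.2} to bound $g$ on bounded $u$-sets; everything else is a straightforward combination of the spectral estimates of Lemma \ref{lem:3.2} and the embeddings \eqref{eqa:2.7}--\eqref{eqa:2.8}.
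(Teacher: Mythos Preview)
Your proof is correct and follows essentially the same approach as the paper: the paper tests $\Phi'(u_i)$ against $u_i^+ \in E_3$ and $-u_i^- \in E_1\oplus E_2$ separately and then adds the resulting inequalities, whereas you test directly against the single element $v_i = u_i^3 - u_i^{12}$, which amounts to the same linear combination. The spectral estimates from Lemma~\ref{lem:3.2}, the sublinear bound $|\tilde f(t,x,u)-bu|\le \varepsilon|u|+C_\varepsilon$ from \eqref{eqa:1.3}, and the embeddings \eqref{eqa:2.7}--\eqref{eqa:2.8} are used identically in both arguments.
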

\begin{proof}
Split $u_i = u^+_i +u^-_i$, where $u^+_i \in E_3$ and $u^-_i \in E_1\oplus E_2$, $i= 1,2, \cdots$.

(i) For $u^+_i \in E_3$, since $\Phi' (u_i) \rightarrow 0$ as $i \rightarrow \infty$, from \eqref{eqa:2.10}, we have
\begin{eqnarray}\label{eqa:4.1}
o(1)\|u^+_i\|_E &\geq & \langle \Phi'( u_i), u^+_i \rangle = \langle (L - a)u^+_i, u^+_i \rangle - \int_\Omega \tilde{f}(t,x, u_i)u^+_i \rho\textrm{d}t \textrm{d}x \nonumber\\
&=& \langle (L - a - b)u^+_i, u^+_i \rangle - \int_\Omega (\tilde{f}(t,x, u_i)- b u_i)u^+_i \rho\textrm{d}t \textrm{d}x.\ \ \ \ \ \ \
\end{eqnarray}
From \eqref{eqa:3.4}, we obtain
\begin{equation}\label{eqa:4.2}
\langle (L - a - b)u^+_i, u^+_i \rangle \geq \gamma_2\|u^+_i\|^2_E.
\end{equation}
On the other side, the condition \eqref{eqa:1.3} shows that for any $\varepsilon > 0$, there exists a constant $C =C(\varepsilon) > 0$ such that
\begin{eqnarray}\label{eqa:4.3}
|\tilde{f}(t, x, u_i) - b u_i| < \varepsilon |u_i| + C.
\end{eqnarray}
Therefore, by H\"{o}lder inequality and \eqref{eqa:2.7}, \eqref{eqa:2.8}, a direct calculation yields
\begin{eqnarray}\label{eqa:4.4}
&& \Big|\int_\Omega (\tilde{f}(t,x, u_i)- b u_i)u^+_i \rho\textrm{d}t \textrm{d}x \Big| \nonumber\\
&\leq & \varepsilon \|u^+_i\|_{L^2(\Omega)}\|u_i\|_{L^2(\Omega)} + C \|u_i^+\|_{L^1(\Omega)} \nonumber\\
&\leq &\frac{\varepsilon}{2\delta} \|u^+_i\|^2_E + \frac{\varepsilon}{2\delta} \|u_i\|^2_E + C \|u_i^+\|_E,
\end{eqnarray}
for some constant $C$ independent of $i$.

Inserting \eqref{eqa:4.2}, \eqref{eqa:4.4} into \eqref{eqa:4.1},  we obtain
\begin{equation}\label{eqa:4.5}
(\gamma_2 - \frac{\varepsilon}{2\delta})\|u^+_i\|^2_E - \frac{\varepsilon}{2\delta}\|u_i\|^2_E - C\|u_i^+\|_E \leq 0.
\end{equation}

(ii) For $u^-_i \in E_1\oplus E_2$, by \eqref{eqa:3.3}, we have $\langle (L - a - b)u^-_i, u^-_i \rangle \leq -\gamma_1\|u^-_i\|^2_E$.
Moreover, noting that
\begin{eqnarray*}
o(1)\|u^-_i\|_E &\geq & \langle -\Phi'( u_i), u^-_i \rangle  \\
&=& -\langle (L - a - b)u^-_i, u^-_i \rangle + \int_\Omega (\tilde{f}(t,x, u_i)- b u_i)u^-_i \rho\textrm{d}t \textrm{d}x,
\end{eqnarray*}
a similar calculation as in \eqref{eqa:4.4} yields
\begin{eqnarray}\label{eqa:4.6}
\Big|\int_\Omega (\tilde{f}(t,x, u_i)- b u_i)u^-_i \rho\textrm{d}t \textrm{d}x \Big|
\leq \frac{\varepsilon}{2\delta} \|u^-_i\|^2_E + \frac{\varepsilon}{2\delta} \|u_i\|^2_E + C \|u_i^-\|_E.
\end{eqnarray}
Consequently,
\begin{eqnarray}\label{eqa:4.7}
(\gamma_1 - \frac{\varepsilon}{2\delta})\|u^-_i\|^2_E - \frac{\varepsilon}{2\delta}\|u_i\|^2_E - C\|u_i^-\|_E \leq 0.
\end{eqnarray}

Let $\gamma_0 = \min\{\gamma_1,\gamma_2\}$. Since $E_1$, $ E_2$ and $E_3$ are orthogonal subspaces of $E$, we have $\|u_i\|^2_E = \|u^+_i\|^2_E + \|u^-_i\|^2_E$. Therefore, by the fact $\|u^+_i\|_E + \|u^-_i\|_E \leq\sqrt{2} \|u_i\|_E$, the sum of \eqref{eqa:4.5} and \eqref{eqa:4.7} yields
\begin{eqnarray}\label{eqa:4.8}
(\gamma_0 - \frac{3\varepsilon}{2\delta})\|u_i\|^2_E - C\|u_i\|_E \leq 0.
\end{eqnarray}
Taking $\varepsilon \in (0, \frac{2\delta \gamma_0}{3})$ in \eqref{eqa:4.8}, we obtain there exists a constant $\widetilde{C} > 0$ independent of $i$ such that $\|u_i\|_E \leq \widetilde{C}$. We arrive at the result.
\end{proof}

Since $E = E_1\oplus E_2 \oplus E_3$, we can rewrite $E = E_1 \oplus E_1^\bot$ for simplicity, where
$$E_1^\bot = E_2\oplus E_3 = \Big\{u \in E: \lambda_{nm}>a,  \ m \in \mathbb{Z}, n\in \mathbb{N} \Big\}.$$
Denote
$$E_0 = \Big\{u \in L^2(\Omega) : \ \  p n = q|m|,  \ m \in \mathbb{Z}, n\in \mathbb{N}\Big\}.$$
\begin{remark}\label{rem:4.2}
Under the assumptions  of Theorem {\upshape\ref{th:2.1}}, the Lemma {\upshape\ref{lem:2.a}} shows $E_0$ is an infinite dimensional subspace spanned by the eigenfunctions $\phi_m\varphi_n$ for $p n = q|m|$. Moreover, it is easy to see that $\dim (E_1^\bot \cap E_0) <\infty$ and $\dim (E_1 \cap E_0) =\infty$.
\end{remark}
\begin{proposition}\label{pp:4.1}
For $1\leq r\leq 2$, the embedding
\begin{equation}\label{eqa:4.9}
E\ominus E_0 \hookrightarrow L^r(\Omega),
\end{equation}
is compact.
\end{proposition}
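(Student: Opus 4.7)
The strategy is the standard one for compact embeddings induced by a self-adjoint operator with discrete spectrum off a bounded set: use the fact that on $E\ominus E_0$ the eigenvalues $\lambda_{nm}-a$ of $L-a$ diverge to infinity, so the norm $\|\cdot\|_E$ controls arbitrarily high frequencies and forces precompactness in $L^2(\Omega)$. Since $L^2(\Omega)\hookrightarrow L^r(\Omega)$ continuously for $1\le r\le 2$, compactness into $L^r$ will follow from compactness into $L^2$.

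First I would establish the key spectral fact: for any $R>0$ the set
\[
\mathcal S_R=\{(n,m)\in\mathbb N\times\mathbb Z:\ pn\neq q|m|,\ |\lambda_{nm}-a|\le R\}
\]
is finite. This is where the hypothesis $pn\neq q|m|$ is used: writing
\[
\lambda_{nm}=p^{-2}(np+\theta_n p-mq)(np+\theta_n p+mq),
\]
and noting that $np-q|m|$ is a nonzero integer when $pn\neq q|m|$, Lemma \ref{lem:2.1} gives $|\theta_n p|\to 0$, so one factor is bounded away from zero for large $n$ while the other factor tends to $+\infty$ whenever $n+|m|\to\infty$. Hence $|\lambda_{nm}|\to\infty$ on the set $pn\neq q|m|$, and therefore $\mathcal S_R$ is bounded in $\mathbb N\times\mathbb Z$, so finite.

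Next I would take a bounded sequence $\{u_j\}\subset E\ominus E_0$, say $\|u_j\|_E\le C$, and expand $u_j=\sum_{pn\neq q|m|}\alpha^{(j)}_{nm}\varphi_n\phi_m$. For every fixed index $(n,m)$ with $pn\neq q|m|$, inequality \eqref{eqa:2.4} yields $|\alpha^{(j)}_{nm}|\le C/\sqrt{\delta}$, so a Cantor diagonal extraction gives a subsequence (still called $u_j$) and scalars $\alpha_{nm}$ with $\alpha^{(j)}_{nm}\to\alpha_{nm}$ coordinate-wise. Define $u=\sum\alpha_{nm}\varphi_n\phi_m$; Fatou's lemma applied to the weighted sum shows $u\in E\ominus E_0$ with $\|u\|_E\le C$.

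Finally I would prove $u_j\to u$ in $L^2(\Omega)$ by the usual tail split:
\[
\|u_j-u\|^2_{L^2(\Omega)}=\sum_{(n,m)\in\mathcal S_R}|\alpha^{(j)}_{nm}-\alpha_{nm}|^2+\sum_{(n,m)\notin\mathcal S_R}|\alpha^{(j)}_{nm}-\alpha_{nm}|^2.
\]
The second sum is controlled by $R^{-1}\sum|\lambda_{nm}-a||\alpha^{(j)}_{nm}-\alpha_{nm}|^2\le 4C^2/R$, which can be made arbitrarily small by choosing $R$ large. For that fixed $R$, the first sum is a finite sum of coordinate-wise convergent quantities and vanishes as $j\to\infty$. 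This establishes compactness $E\ominus E_0\hookrightarrow L^2(\Omega)$, and then \eqref{eqa:2.8} upgrades it to compactness into $L^r(\Omega)$ for $1\le r\le 2$. The main obstacle is really the opening spectral estimate showing $\mathcal S_R$ is finite; once this is in hand, the argument is standard Hilbert-space bookkeeping.
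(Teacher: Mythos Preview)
Your proposal is correct and rests on the same key spectral observation as the paper's proof: on $E\ominus E_0$ (i.e., for $pn\neq q|m|$) the eigenvalues satisfy $|\lambda_{nm}-a|\to\infty$, which forces compactness into $L^2(\Omega)$, and then the continuous embedding $L^2(\Omega)\hookrightarrow L^r(\Omega)$ for $1\le r\le 2$ finishes the job. The only difference is in packaging. The paper writes the embedding as a composition $I_2\circ I_1\circ I_0$, where $I_0:E\ominus E_0\to l^2$ and $I_2:l^2\to L^2(\Omega)$ are isometries and $I_1:l^2\to l^2$ is the diagonal operator with entries $|\lambda_{nm}-a|^{-1/2}\to 0$, hence compact; it simply quotes the divergence $|\lambda_{nm}-a|\to\infty$ without rederiving it. You instead prove the finiteness of $\mathcal S_R$ explicitly and then carry out the diagonal extraction plus tail-split estimate by hand, which is exactly the standard proof that such a diagonal operator is compact. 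Your version is more self-contained; the paper's is more concise by outsourcing that step to a well-known fact about diagonal operators on $l^2$.
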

\begin{proof}
For $u \in E\ominus E_0$, it can be expanded as $u = \sum\limits_{n,m} \alpha_{nm} \phi_m\varphi_n$ for $pn \neq q|m|$.

The fact $\|u\|_E = \big(\sum\limits_{n,m}|\lambda_{nm} - a| |\alpha_{nm}|^2\big)^\frac{1}{2}$ shows that the mapping
$$I_0: u = \sum\limits_{n,m} \alpha_{nm} \phi_m\varphi_n \mapsto \{|\lambda_{nm} - a|^\frac{1}{2} \alpha_{nm}\}$$
is continuous from $E\ominus E_0$ to $l^2$.

Since $|\lambda_{nm} - a|\rightarrow \infty$ as $m, n \rightarrow \infty$, it follows that the mapping
$$I_1:\{|\lambda_{nm} - a|^\frac{1}{2}\alpha_{nm}\} \mapsto \{\alpha_{nm}\}$$
is compact from $l^2$ to $l^2$.

Since $\phi_m\varphi_n$ is a complete orthonormal sequence of $L^2(\Omega)$, then the mapping
$$I_2:\{\alpha_{nm}\} \mapsto u= \sum\limits_{n,m} \alpha_{nm} \phi_m\varphi_n$$
is continuous from $l^2$ to $L^2(\Omega)$.

Consequently, the mapping
$$ I_2I_1I_0:E\ominus E_0 \rightarrow L^2(\Omega)$$
is compact. Furthermore, for $1\leq r \leq 2$, the continuous embedding $L^2(\Omega) \hookrightarrow L^r(\Omega)$ implies that the embedding $E\ominus E_0\hookrightarrow L^r(\Omega)$ is compact.
\end{proof}

\begin{lemma}\label{lem:4.2}
Let the assumptions of Theorem {\upshape\ref{th:2.1}} hold. Then $\Phi$ satisfies the $(PS)_c$ condition for any $c \in \mathbb{R}$.
\end{lemma}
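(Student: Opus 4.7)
The plan is to combine the a priori bound from Lemma \ref{lem:4.1} with the partial compactness supplied by Proposition \ref{pp:4.1}, and to compensate for the missing compactness on the resonant subspace $E_0$ by invoking the monotonicity hypothesis on $\tilde{f}$. Let $\{u_i\}\subset E$ satisfy $\Phi(u_i)\to c$ and $\Phi'(u_i)\to 0$ in $E^*$. Lemma \ref{lem:4.1} gives $\|u_i\|_E\le\widetilde{C}$, so I decompose $u_i=v_i+z_i$ with $v_i\in E\ominus E_0$ and $z_i\in E_0$. By Proposition \ref{pp:4.1}, a subsequence satisfies $v_i\to v$ strongly in $L^2(\Omega)$. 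Writing $z_i=z_i^{01}+z_i^{02}$ with $z_i^{01}\in E_0\cap E_1$ and $z_i^{02}\in E_0\cap E_1^\perp$, Remark \ref{rem:4.2} makes the second subspace finite-dimensional, so $z_i^{02}$ converges strongly in $E$ along a further subsequence.

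For the infinite-dimensional component $z_i^{01}$, I would test $\Phi'(u_i)-\Phi'(u_j)\to 0$ against $z_i^{01}-z_j^{01}$. Since $z_i^{01}-z_j^{01}\in E_0\cap E_1\subset E_1$ and the spectral basis is $L^2$-orthogonal, \eqref{eqa:3.1} gives $\langle(L-a)(u_i-u_j),z_i^{01}-z_j^{01}\rangle=-\|z_i^{01}-z_j^{01}\|_E^2$, whence
\begin{equation*}
\|z_i^{01}-z_j^{01}\|_E^2 = -\int_\Omega\bigl(\tilde{f}(t,x,u_i)-\tilde{f}(t,x,u_j)\bigr)(z_i^{01}-z_j^{01})\rho\,\textrm{d}t\,\textrm{d}x + o(1)\|z_i^{01}-z_j^{01}\|_E.
\end{equation*}
Rewriting $z_i^{01}-z_j^{01}=(u_i-u_j)-(v_i-v_j)-(z_i^{02}-z_j^{02})$, the first piece contributes a non-positive quantity thanks to monotonicity of $\tilde{f}$ in $u$, whereas the other two pieces are controlled via the global Lipschitz estimate $|\tilde{f}(u_i)-\tilde{f}(u_j)|\le(e-\kappa)|u_i-u_j|$ coupled with the strong $L^2$ convergence of $v_i-v_j$ and of $z_i^{02}-z_j^{02}$. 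This forces $z_i$ to be Cauchy in $E$.

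For $v_i$, the key linear fact is that on $E\ominus E_0$ the eigenvalues satisfy $|\lambda_{nm}-a|\to\infty$ as $|n|+|m|\to\infty$, which is the spectral observation used in Section \ref{sec:2} and in the proof of Proposition \ref{pp:4.1}; by the same tail argument this makes $(L-a)^{-1}$ a compact operator from $L^2(\Omega)$ into $E$ when restricted to $E\ominus E_0$. Projecting $\Phi'(u_i)\to 0$ onto $E\ominus E_0$ and inverting gives $v_i=(L-a)^{-1}Q_0\tilde{f}(\cdot,\cdot,u_i)+o(1)$ in $E$, where $Q_0$ is the $L^2$-orthogonal projection onto $E\ominus E_0$. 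The bound $|\tilde{f}(t,x,u)|\le(e-\kappa)|u|$, immediate from $\tilde{f}(t,x,0)=0$ and $0\le\partial_u\tilde{f}\le e-\kappa$, keeps $\{\tilde{f}(\cdot,\cdot,u_i)\}$ bounded in $L^2(\Omega)$, so the compactness extracts a strongly convergent subsequence of $v_i$ in $E$. Combined with the previous step, a subsequence of $u_i=v_i+z_i$ converges strongly in $E$, which is the $(PS)_c$ property.

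The main obstacle is the $z_i^{01}$ step. On the infinite-dimensional subspace $E_0\cap E_1$ the operator $(L-a)^{-1}$ is bounded but not compact, because the eigenvalues $\lambda_{nm}$ on $E_0$ accumulate inside $[2\rho_2,\rho_1]$; no purely linear compactness argument is available there. It is the monotonicity hypothesis on $\tilde{f}$—producing the non-positive quantity $-\int_\Omega(\tilde{f}(u_i)-\tilde{f}(u_j))(u_i-u_j)\rho\,\textrm{d}t\,\textrm{d}x$—that supplies the sign needed to absorb $\|z_i^{01}-z_j^{01}\|_E^2$ against the otherwise uncontrolled $L^2$ size of $u_i-u_j$.
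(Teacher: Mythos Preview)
Your argument is correct and follows the same overall architecture as the paper: bound via Lemma~\ref{lem:4.1}, decompose according to $E_0$ and to $E_1$, use compactness away from $E_0$, use finite dimensionality on $E_0\cap E_1^\perp$, and use the monotonicity of $\tilde{f}$ on the infinite-dimensional piece $E_0\cap E_1$. The tactical choices differ in two places. First, on $E\ominus E_0$ the paper splits further into $E_1^\perp\ominus E_0$ and $E_1\ominus E_0$ and tests $\Phi'(u_i)$ against $v_i-v$ (respectively $w_i-w$), exploiting the sign identity $\langle(L-a)(\cdot),\cdot\rangle=\pm\|\cdot\|_E^2$ on these subspaces together with the compact embedding of Proposition~\ref{pp:4.1}; you instead observe directly that $(L-a)^{-1}:L^2\ominus E_0\to E\ominus E_0$ is compact and invert the projected equation, which handles both sign regimes at once and is a little cleaner. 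Second, for the resonant component the paper passes to the weak limit $z$ and estimates $\|z_i-z\|_E^2$ using the decomposition \eqref{eqa:4.19}--\eqref{eqa:4.21}, whereas you run the same monotonicity estimate as a Cauchy criterion in $i,j$; the two are equivalent. Both routes rely on exactly the same three ingredients (Proposition~\ref{pp:4.1}, Remark~\ref{rem:4.2}, and the monotonicity hypothesis), so the difference is presentational rather than substantive.
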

\begin{proof}
For any $c \in \mathbb{R}$, assume $\{u_i\} \subset E$ satisfies $\Phi (u_i) \rightarrow c$ and $\Phi' (u_i) \rightarrow 0$ as $i \rightarrow \infty$.
Since $E$ is a Hilbert space, by Lemma \ref{lem:4.1}, we have $u_i \rightharpoonup u$ as $i \rightarrow \infty$ for some $u \in E$.
Decompose $u_i = v_i + y_i + w_i + z_i$ and $u = v + y + w + z$,  where $v$, $y$, $w$, $z$ are the weak limits of $\{v_i\}$,  $\{y_i\}$, $\{w_i\}$, $\{z_i\}$ respectively, and $v_i, v \in E_1^\bot \ominus E_0$, $y_i, y \in E_1^\bot \cap E_0$, $w_i, w\in E_1 \ominus E_0$, $z_i, z \in E_1 \cap E_0$.

(i) For $v_i, v \in E_1^\bot \ominus E_0$, from \eqref{eqa:3.2} we have
\begin{eqnarray}\label{eqa:4.10}
\|v_i-v\|^2_E &=& \langle (L - a)( v_i-v), v_i-v \rangle \nonumber\\
&=& \langle (L - a) v_i, v_i-v \rangle -\langle (L - a) v, v_i-v \rangle.
\end{eqnarray}
In virtue of $v_i \rightharpoonup v$ in $E$ and $E \hookrightarrow L^2(\Omega)$, we have $v_i \rightharpoonup v$ in $L^2(\Omega)$ along with a  subsequence of $\{v_i\}$. In fact, by Proposition \ref{pp:4.1}, we also have $v_i \rightarrow v$ in ${L^2(\Omega)}$. We still use $\{v_i\}$ to denote the  subsequence for convenience. Thus, it follows
\begin{eqnarray}\label{eqa:4.11}
\langle (L - a) v, v_i-v \rangle \rightarrow 0, \ \ {\rm as} \ \ i \rightarrow \infty.
\end{eqnarray}

On the other hand, noting $v_i, v \in E_1^\bot \ominus E_0$ and $u_i = v_i + y_i + w_i + z_i$, we have $u_i - v_i\in (E_1^\bot \ominus E_0)^\perp$. Thus
$$\langle (L - a)( u_i-v_i), v_i-v \rangle = 0.$$
By \eqref{eqa:2.10}, we have
\begin{eqnarray}\label{eqa:4.12}
&&\langle (L - a) v_i, v_i-v \rangle = \langle (L - a) u_i, v_i-v \rangle \nonumber\\
&=& \langle \Phi'(u_i), v_i-v \rangle + \int_\Omega \tilde{f}(t,x, u_i)(v_i-v) \rho\textrm{d}t \textrm{d}x.
\end{eqnarray}
Since $\Phi' (u_i) \rightarrow 0$ as $i \rightarrow \infty$, we have
\begin{eqnarray}\label{eqa:4.13}
\langle \Phi'(u_i), v_i-v \rangle \rightarrow 0, \ \textrm{as} \ \ i \rightarrow \infty.
\end{eqnarray}
By \eqref{eqa:4.3} and H\"{o}lder inequality, a direct calculation yields
\begin{eqnarray*}
\Big|\int_\Omega \tilde{f}(t,x, u_i)(v_i-v) \rho\textrm{d}t \textrm{d}x \Big|
\leq  (b + \varepsilon ) \|u_i\|_{L^2(\Omega)}\|v_i-v\|_{L^2(\Omega)}+ C\|v_i-v\|_{L^1(\Omega)}.
\end{eqnarray*}

In virtue of $v_i \rightarrow v$ in ${L^2(\Omega)}$ and the continuous embedding $L^2(\Omega, \rho)  \hookrightarrow L^1(\Omega)$, we obtain  $v_i \rightarrow v$ in ${L^1(\Omega)}$. Therefore,
\begin{equation}\label{eqa:4.14}
\Big|\int_\Omega \tilde{f}(t,x, u_i)(v_i-v) \rho\textrm{d}t \textrm{d}x \Big| \rightarrow 0, \ \textrm{as} \ \ i \rightarrow \infty.
\end{equation}
Inserting \eqref{eqa:4.13}, \eqref{eqa:4.14} into \eqref{eqa:4.12}, we have
\begin{eqnarray}\label{eqa:4.15}
\langle (L - a) v_i, v_i-v \rangle \rightarrow 0, \ \textrm{as} \ \ i \rightarrow \infty.
\end{eqnarray}
Finally, substituting \eqref{eqa:4.15}, \eqref{eqa:4.11} into \eqref{eqa:4.10}, we have
\begin{eqnarray}\label{eqa:4.16}
\|v_i-v\|_E \rightarrow 0,  \ \textrm{as} \ \ i \rightarrow \infty.
\end{eqnarray}

(ii) For $y_i, y \in E_1^\bot \cap E_0$, since $\dim (E_1^\bot \cap E_0) <\infty$ and $y_i \rightharpoonup y$ in $E$, then there exists a subsequence of $\{y_i\}$ which strongly converges to $y$ in $E$. The subsequence is still denoted by $\{y_i\}$.

(iii) For $w_i, w\in E_1 \ominus E_0$, by $\eqref{eqa:2.10}$ and $\eqref{eqa:3.1}$ we obtain
\begin{eqnarray*}
\|w_i-w\|^2_E &=& - \langle (L - a)(w_i-w), w_i-w \rangle \\
&=& - \langle \Phi'(u_i), w_i-w \rangle - \int_\Omega \tilde{f}(t,x, u_i)(w_i-w) \rho\textrm{d}t \textrm{d}x + \langle (L - a) w, w_i-w \rangle.
\end{eqnarray*}
A similar calculation as in (i) yields
\begin{equation}\label{eqa:4.17}
\|w_i-w\|_E \rightarrow 0,  \ \textrm{as} \ \ i \rightarrow \infty.
\end{equation}

(iv) Since $z_i\in E_1 \cap E_0$, then the compact embedding \eqref{eqa:4.9} is invalid. In addition, since $\dim (E_1 \cap E_0) = \infty$,  we can not extract a strong convergence subsequence of $\{z_i\}$ similar to $\{y_i\}$. In what follows, with the aid of the monotone method, we prove $z_i \rightarrow z$ as $i \rightarrow \infty$ in $E$.

Since $\Phi' (u_i) \rightarrow 0$ and $z_i \rightharpoonup z$ in $E$, we have
\begin{eqnarray}\label{eqa:4.18}
&&\|z_i-z\|^2_E = - \langle (L - a)( z_i-z), z_i-z \rangle \nonumber\\
&=& - \langle \Phi'(u_i), z_i-z \rangle - \int_\Omega \tilde{f}(t,x, u_i)(z_i-z) \rho\textrm{d}t \textrm{d}x + \langle (L - a) z, z_i-z \rangle\nonumber\\
&\leq &  - \int_\Omega \tilde{f}(t,x, u_i)(z_i-z) \rho\textrm{d}t \textrm{d}x + o(1).
\end{eqnarray}

Denote $\tilde{f}(u_i)=\tilde{f}(t,x, u_i)$ for simplicity.  We rewrite $\int_\Omega \tilde{f}(t,x, u_i)(z_i-z) \rho\textrm{d}t \textrm{d}x$ in the form of inner product and decompose it as follows
\begin{eqnarray}\label{eqa:4.19}
&& \int_\Omega \tilde{f}(t,x, u_i)(z_i-z) \rho\textrm{d}t \textrm{d}x\\
& =& \langle \tilde{f}(u_i), z_i-z \rangle  \nonumber\\
&=&\langle \tilde{f}(u_i)- \tilde{f}(\tilde{u}_i + z), z_i-z \rangle + \langle \tilde{f}(\tilde{u}_i + z) - \tilde{f}(u), z_i-z \rangle\nonumber\\
 &&+ \langle \tilde{f}(u), z_i-z \rangle,
\end{eqnarray}
where $\tilde{u}_i = v_i + y_i + w_i$.

Since $\tilde{f}$ is increasing in $u$, then
\begin{equation}\label{eqa:4.20}
\langle \tilde{f}(u_i)- \tilde{f}(\tilde{u}_i + z), z_i-z \rangle \geq 0.
\end{equation}

To continue the discussion, by \eqref{eqa:4.3}, we have $\tilde{f} : u \mapsto \tilde{f}(t,x, u)$ is continuous from ${L^2(\Omega)}$ to ${L^2(\Omega)}$. Moreover, from the proof of (i)--(iii), we have $\tilde{u}_i \rightarrow \tilde{u}$ in $E$, where $\tilde{u} = v + y + w$. The inequality \eqref{eqa:2.7} shows $\tilde{u}_i \rightarrow \tilde{u}$ in $L^2(\Omega)$. Thus
\begin{equation}\label{eqa:4.21}
\langle \tilde{f}(\tilde{u}_i + z) - \tilde{f}(u), z_i-z \rangle \rightarrow 0, \ \ {\rm as} \ \ i \rightarrow \infty.
\end{equation}

Consequently, since $z_i \rightharpoonup z$ in ${L^2(\Omega)}$, by \eqref{eqa:4.18}, \eqref{eqa:4.20} and \eqref{eqa:4.21}, we have
\begin{eqnarray*}
\|z_i-z\|_E \rightarrow 0,  \ \textrm{as} \ \ i \rightarrow \infty.
\end{eqnarray*}
We arrive at result.
\end{proof}

\section{Bounds of the reduction functional}
\setcounter{equation}{0}
\label{sec:5}

The following three lemmas are concerned with the bounds of the reduction functional $\widetilde{\Phi}$ and play an important role in the proof of Theorem \ref{th:2.1}.
\begin{lemma}\label{lem:5.1}
Let the assumptions of Theorem {\upshape\ref{th:2.1}} hold. Then \\
 (i) there exists a constant $M > 0$, such that $\widetilde{\Phi}(u) < M$, $\forall u \in E_2${\upshape ;}\\
 (ii) there exists a constant $\widetilde{R} > 0$ such that $\widetilde{\Phi}(u) \leq 0$ for $u \in E_2$ with $\|u\|_E \geq \widetilde{R}$.
\end{lemma}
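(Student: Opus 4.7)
\textbf{Proof plan for Lemma \ref{lem:5.1}.} My plan is to prove both (i) and (ii) at once by establishing the stronger estimate
\[
\widetilde{\Phi}(u) \le -\alpha\,\|u\|_E^2 + C, \qquad \forall\, u \in E_2,
\]
for some constants $\alpha, C > 0$ independent of $u$. This immediately gives (i) with $M = C + 1$ and (ii) with $\widetilde{R} = \sqrt{C/\alpha}$. The starting point is the min-max representation in \eqref{eqa:3.9}: by choosing $w = 0 \in E_3$ one obtains
\[
\widetilde{\Phi}(u) \;\le\; \sup_{v\in E_1} \Phi(u+v),
\]
so it suffices to bound $\Phi(u+v)$ uniformly in $v \in E_1$ by $-\alpha\|u\|_E^2 + C$.

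The first ingredient I would record is a pointwise lower bound on $\widetilde{F}$: from the asymptotic linearity \eqref{eqa:1.3}, the inequality $|\widetilde{f}(t,x,s) - bs| \le \varepsilon |s| + C(\varepsilon)$ holds for every $\varepsilon > 0$ with some $C(\varepsilon)$, and integration from $0$ to $s$ yields
\[
\widetilde{F}(t,x,s) \;\ge\; \tfrac{b-\varepsilon}{2}\, s^2 - C_\varepsilon, \qquad \forall\,(t,x,s).
\]
The second ingredient is that $E_1, E_2, E_3$ are spanned by disjoint families of eigenfunctions $\phi_m\varphi_n$, hence mutually orthogonal in both $\langle\cdot,\cdot\rangle_0$ and $L^2(\Omega)$. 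Combining this with \eqref{eqa:3.1}--\eqref{eqa:3.2}, for $u \in E_2$ and $v \in E_1$,
\[
\langle (L-a)(u+v),u+v\rangle = \|u\|_E^2 - \|v\|_E^2, \qquad \|u+v\|_{L^2(\Omega)}^2 = \|u\|_{L^2(\Omega)}^2 + \|v\|_{L^2(\Omega)}^2.
\]
Inserting this and the $\widetilde{F}$-bound into the definition \eqref{eqa:2.9} of $\Phi$ and discarding the manifestly nonpositive terms $-\tfrac{1}{2}\|v\|_E^2$ and $-\tfrac{b-\varepsilon}{2}\|v\|_{L^2}^2$ produces
\[
\Phi(u+v) \;\le\; \tfrac{1}{2}\|u\|_E^2 - \tfrac{b-\varepsilon}{2}\|u\|_{L^2(\Omega)}^2 + C.
\]

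Now I would exploit that $u \in E_2$ lives on eigenvalues in $(a,b)$, so $\|u\|_E^2 \le (b-a)\|u\|_{L^2(\Omega)}^2$, which gives
\[
\Phi(u+v) \;\le\; \tfrac{1}{2}\Bigl(1 - \tfrac{b-\varepsilon}{b-a}\Bigr)\|u\|_E^2 + C \;=\; \tfrac{\varepsilon - a}{2(b-a)}\|u\|_E^2 + C.
\]
Since $(A1)$ forces $\rho_1 > 0$ and the hypothesis gives $a > \rho_1$, I am free to choose $\varepsilon \in (0, a)$; this yields $\alpha := \tfrac{a-\varepsilon}{2(b-a)} > 0$ and the bound is uniform in $v \in E_1$. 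The main subtlety I expect is keeping track of the $L^2$-orthogonality between $E_1$ and $E_2$ (so that the Pythagorean identity truly holds and the $v$-terms can be dropped without creating cross terms), together with extracting a genuinely uniform constant $C_\varepsilon$ from the asymptotic condition \eqref{eqa:1.3}; once these are in hand, the estimate is a finite-dimensional quadratic comparison on $E_2$ and both conclusions follow at once.
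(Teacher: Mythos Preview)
Your argument is correct. The route differs from the paper's in how the negative quadratic coefficient is extracted. The paper rewrites $\Phi(u+v)$ using the shifted form $\langle(L-a-b)(u+v),u+v\rangle$ together with the remainder $\widetilde{F}-\tfrac{b}{2}(\cdot)^2$, and then invokes Lemma~\ref{lem:3.2} (inequality \eqref{eqa:3.3}) to obtain $\langle(L-a-b)(u+v),u+v\rangle \le -\gamma_1\|u+v\|_E^2$ on all of $E_1\oplus E_2$; this yields $\Phi(u+v)\le -\tfrac{\gamma_1}{4}\|u+v\|_E^2 + C\|u+v\|_E$, after which orthogonality is used to split off the $v$-contribution. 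You instead keep the form $(L-a)$, split immediately via $L^2$-orthogonality of $E_1$ and $E_2$, discard the $v$-terms as nonpositive, and then use the elementary spectral inequality $\|u\|_E^2 \le (b-a)\|u\|_{L^2}^2$ on $E_2$ alone. Your path is slightly more self-contained (it does not rely on Lemma~\ref{lem:3.2}) and produces a constant remainder rather than a linear one, while the paper's organization has the advantage that the same quadratic-form estimate \eqref{eqa:3.3} is reused in the Palais--Smale argument. Both approaches exploit the same underlying spectral gap and the asymptotic linearity \eqref{eqa:1.3}.
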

\begin{proof}
For $u \in E_2$, from \eqref{eqa:3.9}, we have
$$\widetilde{\Phi}(u) = \min_{w \in E_3} \max_{v\in E_1} \Phi(u + v+w) \leq \max_{v\in E_1}  \Phi(u + v),$$
where
\begin{equation}\label{eqa:5.1}
\Phi(u +v)  = \frac{1}{2}\langle(L - a - b)(u +v), u +v\rangle - \int_\Omega \left(\widetilde{F}(t,x, u +v) - \frac{b}{2}(u +v)^2\right)\rho\textrm{d}t \textrm{d}x.
\end{equation}
By \eqref{eqa:4.3}, it follows
\begin{eqnarray}\label{eqa:5.2}
|\widetilde{F}(t,x, u +v) - \frac{b}{2}(u +v)^2|
\leq   \varepsilon |u +v|^2 + C|u +v|.
\end{eqnarray}
Inserting \eqref{eqa:5.2} into \eqref{eqa:5.1}, by \eqref{eqa:3.3}, it follows
\begin{eqnarray*}
\Phi(u +v) &\leq & -\frac{\gamma_1}{2} \|u +v\|^2_E + \int_\Omega \Big(\varepsilon |u +v|^2 + C|u +v|\Big) \rho\textrm{d}t \textrm{d}x \nonumber\\
&\leq & -\frac{\gamma_1}{2} \|u +v\|^2_E  + \varepsilon \|u +v\|^2_{L^2(\Omega)} + C\|u +v\|_{L^1(\Omega)},
\end{eqnarray*}
for some constant $C$ depending on $\varepsilon$.

Taking $\varepsilon = \frac{\delta \gamma_1}{4}$ in above inequality, by \eqref{eqa:2.7} and \eqref{eqa:2.8},  we obtain
\begin{eqnarray}\label{eqa:5.3}
\Phi(u +v) &\leq &  -\frac{\gamma_1}{4} \|u +v\|^2_E + C\|u +v\|_E.
\end{eqnarray}
Therefore, the estimate \eqref{eqa:5.3} shows there exists $M > 0$ such that $\Phi(u +v) \leq M$ and the assertion (i) is proved.

 By the fact $\|u +v\|^2_E = \|u \|^2_E + \|v\|^2_E$ and $\|u +v\|_E \leq \|u \|_E + \|v\|_E$ , from  \eqref{eqa:5.3}, we have
\begin{eqnarray*}
\Phi(u +v) &\leq & -\frac{\gamma_1}{4} \|u \|^2_E + C \|u \|_E + (-\frac{\gamma_1}{4} \|v \|^2_E + C \|v \|_E)\\
&\leq &  -\frac{\gamma_1}{4} \|u \|^2_E + C \|u \|_E  + C_0,
\end{eqnarray*}
where $C_0 = \max\limits_{s \geq 0} \{-\frac{\gamma_1}{4} s^2 + C s \}$.
Thus, there exists a constant $\widetilde{R} > 0$ such that $\Phi(u +v) \leq 0$ for $\|u\|_E \geq \widetilde{R}$. We prove the assertion (ii).
\end{proof}
\begin{lemma}\label{lem:5.2}
 Let the assumptions of Theorem {\upshape\ref{th:2.1}} hold. Then for any $R_1 > 0$, there exists a constant $\tau_1$ depending on $R_1$ such that $$\widetilde{\Phi}(u) \geq \tau_1, \ \ \forall  u \in E_2 \cap B_{R_1},$$ where $B_{R_1} = \{u \in E : \|u \|_E < R_1\}$.
\end{lemma}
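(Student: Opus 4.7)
The plan is to use the minimax representation in \eqref{eqa:3.9}: since
$$\widetilde{\Phi}(u) = \max_{v \in E_1} \min_{w \in E_3} \Phi(u+v+w) \geq \min_{w \in E_3} \Phi(u+w)$$
by choosing the admissible element $v = 0 \in E_1$, it suffices to produce a lower bound on $\Phi(u+w)$ that is uniform in $w \in E_3$ and depends only on $R_1$ whenever $u \in E_2 \cap B_{R_1}$.

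First I would expand $\Phi(u+w)$. Since $u \in E_2$ and $w \in E_3$ belong to disjoint spectral components of $L - a$, the $L^2$-orthogonality of the eigenfunctions together with \eqref{eqa:3.2} yields
$$\langle (L-a)(u+w),\, u+w\rangle = \|u\|_E^2 + \|w\|_E^2.$$
Integrating the asymptotic linearity estimate \eqref{eqa:4.3} gives, for any $\varepsilon > 0$, a constant $C(\varepsilon)$ such that $\widetilde{F}(t,x,s) \leq \tfrac{b+\varepsilon}{2} s^2 + C(\varepsilon)|s|$. Combining this with $\|u+w\|_{L^2}^2 = \|u\|_{L^2}^2 + \|w\|_{L^2}^2$ (from $L^2$-orthogonality) and the triangle inequality in $L^1$, one arrives at
$$\Phi(u+w) \geq \tfrac{1}{2}(\|u\|_E^2 + \|w\|_E^2) - \tfrac{b+\varepsilon}{2}(\|u\|_{L^2}^2 + \|w\|_{L^2}^2) - C(\|u\|_{L^1} + \|w\|_{L^1}).$$

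The decisive step is to dispose of the $w$-terms using the spectral gap. For $w \in E_3$, \eqref{eqa:2.6} gives $\|w\|_{L^2}^2 \leq e^{-1}\|w\|_E^2$, and \eqref{eqa:2.5} guarantees $e > b$; choosing $\varepsilon$ so small that $b+\varepsilon < e$ makes $\tfrac{1}{2} - \tfrac{b+\varepsilon}{2e} > 0$. Combined with $\|w\|_{L^1} \leq C \|w\|_E$ from \eqref{eqa:2.8}, completing the square in $\|w\|_E$ shows the $w$-contribution is bounded below by an absolute constant. For the $u$-part, $\dim E_2 < \infty$ by Lemma \ref{lem:2.a}, so all norms are equivalent on $E_2$ and hence $\|u\|_{L^2},\, \|u\|_{L^1} \leq C_0 R_1$ whenever $\|u\|_E \leq R_1$; the negative $u$-contribution is therefore bounded below by a constant depending only on $R_1$. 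Summing the two bounds delivers the desired $\tau_1$. I do not foresee any genuine obstacle: the essential ingredient is the spectral gap $e > b$, which kills the $L^2$-term on $E_3$ uniformly, while finite-dimensionality of $E_2$ reduces the dependence on $R_1$ to elementary bookkeeping.
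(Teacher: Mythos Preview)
Your proposal is correct and follows essentially the same route as the paper: both reduce via \eqref{eqa:3.9} to bounding $\Phi(u+w)$ from below over $w\in E_3$, then exploit the spectral gap $e>b$ (which the paper packages as $\gamma_2=1-b/e$ via Lemma~\ref{lem:3.2}) to make the quadratic $w$-contribution coercive and complete the square, while the $u$-contribution is controlled by $\|u\|_E<R_1$. The only cosmetic differences are that the paper shifts by $b$ up front and applies Lemma~\ref{lem:3.2} rather than re-deriving the gap inequality, and it bounds the $u$-terms using the embeddings \eqref{eqa:2.7}--\eqref{eqa:2.8} directly rather than invoking $\dim E_2<\infty$; neither changes the substance of the argument.
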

\begin{proof}
For $u \in E_2$, by \eqref{eqa:3.9}, we have
$$\widetilde{\Phi}(u) = \max_{v\in E_1} \min_{w \in E_3} \Phi(u + v+w) \geq \min_{w \in E_3}  \Phi(u + w),$$
where
\begin{equation}\label{eqa:5.4}
\Phi(u +w)  = \frac{1}{2}\langle(L- a - b)(u +w), u +w\rangle
 - \int_\Omega \left(\widetilde{F}(t,x, u +w) - \frac{b}{2}(u +w)^2\right)\rho\textrm{d}t \textrm{d}x.
\end{equation}

Since $E_2$ and $E_3$ are orthogonal subspaces of $E$, we have
\begin{equation*}
\langle(L-a - b)(u +w), u +w\rangle = \langle(L-a - b)u, u \rangle + \langle(L-a-b)w, w\rangle.
\end{equation*}

By \eqref{eqa:3.4}, we have $\langle(L-a - b)w, w\rangle \geq \gamma_2 \|w \|^2_E$.
Moreover, noting $\langle (L - a)u, u \rangle = \|u\|^2_E$,
by \eqref{eqa:2.7}, we obtain
\begin{equation*}
|\langle(L-a - b)u, u \rangle|
\leq  \|u \|^2_E + b  \|u \|^2_{L^2(\Omega)}
  \leq  (1 + \frac{b}{\delta}) \|u \|^2_E = C_1 \|u \|^2_E,
\end{equation*}
where $C_1 = 1 + \frac{b}{\delta}$. Thus, we have
\begin{eqnarray}\label{eqa:5.5}
\langle(L-a - b)(u +w), u +w\rangle \geq \gamma_2 \|w \|^2_E - C_1 \|u \|^2_E.
\end{eqnarray}

On the other hand, a similar calculations as in \eqref{eqa:5.2} yields
\begin{eqnarray}\label{eqa:5.6}
|\widetilde{F}(t,x, u +w) - \frac{b}{2}(u +w)^2|\leq \varepsilon |u +w|^2 + C|u +w|.
\end{eqnarray}

Inserting \eqref{eqa:5.5}, \eqref{eqa:5.6} into \eqref{eqa:5.4} and taking $\varepsilon = \frac{\delta \gamma_2}{4}$, from \eqref{eqa:2.7} and \eqref{eqa:2.8}, we have
\begin{eqnarray*}
\Phi(u +w) &\geq & \frac{\gamma_2}{2} \|w \|^2_E - \frac{C_1}{2} \|u \|^2_E -\frac{\delta \gamma_2}{4} \|u +w\|^2_{L^2(\Omega)} - C\|u +w\|_{L^1(\Omega)} \nonumber\\
&\geq & \frac{\gamma_2}{2} \|w \|^2_E - \frac{C_1}{2} \|u \|^2_E - \frac{\gamma_2}{4}(\|u \|^2_E + \|w \|^2_E)- C(\|u \|_E + \|w \|_E)\nonumber\\
&\geq & -(\frac{C_1}{2} + \frac{\gamma_2}{4}) \|u \|^2_E - C \|u \|_E + C_2,
\end{eqnarray*}
where $C_2 = \min\limits_{s \geq 0} \{\frac{\gamma_2}{4} s^2 - C s\}$.

For any $R_1>0$,  the above estimate shows  $\widetilde{\Phi}(u) \geq \tau_1= -(\frac{C_1}{2} + \frac{\gamma_2}{4}) R_1^2 - C R_1 + C_2$ for any $\|u \|_E < R_1$. The proof is completed.
\end{proof}
\begin{lemma}\label{lem:5.3}
Let the assumptions of Theorem {\upshape\ref{th:2.1}} hold. Then there exist two constants $\tau_2 > 0$ and $R_2 > 0$ such that $\widetilde{\Phi}(u) \geq \tau_2$, for any $u \in E_2$ with $\|u\|_E = R_2$.
\end{lemma}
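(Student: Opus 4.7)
The plan is to use the lower bound $\widetilde{\Phi}(u) \geq \min_{w \in E_3} \Phi(u + w)$ (obtained from \eqref{eqa:3.9} by taking $v = 0$ in the outer maximum), and then show that the right-hand side dominates a positive multiple of $\|u\|_E^2$ on a small enough sphere in $E_2$. The ingredients are: (a) $E_2$ is finite-dimensional (by Lemma \ref{lem:2.a}, since $a > \rho_1$ so every $\lambda \in \Lambda(L) \cap (a, b)$ is isolated with finite multiplicity), so there exists $C_2 > 0$ with $\|u\|_{L^\infty(\Omega)} \leq C_2 \|u\|_E$ for all $u \in E_2$; (b) the orthogonality $\langle (L-a) u, w\rangle = 0$ for $u \in E_2$, $w \in E_3$ coming from the eigenfunction decomposition; (c) the strong convexity of $w \mapsto \Phi(u + w)$ on $E_3$, derivable from Lemma \ref{lem:3.3}; and (d) the local smallness $\tilde{f}(t, x, u) = o(|u|)$ near zero given by \eqref{eqa:1.2}.

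First I would fix $\varepsilon > 0$ (to be chosen later) and pick $\delta_\varepsilon > 0$ such that $|\tilde{f}(t, x, u)| \leq \varepsilon |u|$ and $|\widetilde{F}(t, x, u)| \leq \tfrac{\varepsilon}{2} u^2$ whenever $|u| \leq \delta_\varepsilon$. For $u \in E_2$ with $\|u\|_E \leq \delta_\varepsilon / C_2$, both inequalities hold pointwise in $(t, x)$. Using $\langle (L - a) u, u\rangle = \|u\|_E^2$ on $E_2$ together with \eqref{eqa:2.7} (which yields $\|u\|_{L^2(\Omega)}^2 \leq \delta^{-1}\|u\|_E^2$), I will obtain
\begin{equation*}
\Phi(u) \geq \tfrac{1}{2}\|u\|_E^2 - \tfrac{\varepsilon}{2}\|u\|_{L^2(\Omega)}^2 \geq \left(\tfrac{1}{2} - \tfrac{\varepsilon}{2\delta}\right)\|u\|_E^2.
\end{equation*}

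Next, for any $w \in E_3$, I would integrate the monotonicity estimate of Lemma \ref{lem:3.3} applied to the pair $(sw, 0) \in E_3 \times E_3$ with base $u \in E_2 \subset E_1 \oplus E_2$ over $s \in [0, 1]$ to derive the strong-convexity identity $\Phi(u + w) - \Phi(u) \geq \langle \Phi'(u), w\rangle + \tfrac{\gamma}{2}\|w\|_E^2$. By \eqref{eqa:2.10} and the orthogonality $\langle (L - a)u, w\rangle = 0$, the linear term reduces to $\langle \Phi'(u), w\rangle = -\int_\Omega \tilde{f}(t,x,u) w \rho\,\mathrm{d}t\,\mathrm{d}x$, and combining the pointwise estimate on $\tilde{f}$ with H\"older's inequality and \eqref{eqa:2.6} (giving $\|w\|_{L^2(\Omega)}^2 \leq e^{-1}\|w\|_E^2$) yields
\begin{equation*}
\bigl|\langle \Phi'(u), w\rangle\bigr| \leq \varepsilon (\delta e)^{-1/2} \|u\|_E \|w\|_E.
\end{equation*}
Young's inequality absorbs the $\|w\|_E^2$-term into the strong-convexity term and delivers $\Phi(u + w) \geq \Phi(u) - \tfrac{\varepsilon^2}{2\gamma \delta e}\|u\|_E^2$, a bound that is independent of $w$.

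Taking the infimum over $w \in E_3$ and combining with the bound for $\Phi(u)$ gives
\begin{equation*}
\widetilde{\Phi}(u) \;\geq\; \left(\tfrac{1}{2} - \tfrac{\varepsilon}{2\delta} - \tfrac{\varepsilon^2}{2\gamma \delta e}\right)\|u\|_E^2
\end{equation*}
on $\{u \in E_2 : \|u\|_E \leq \delta_\varepsilon/C_2\}$. Selecting $\varepsilon > 0$ small enough that the parenthesized coefficient exceeds $\tfrac{1}{4}$, and then setting $R_2 = \delta_\varepsilon / C_2$ and $\tau_2 = R_2^2/4$, produces the desired conclusion. The main obstacle is the cross term $\int_\Omega \tilde{f}(t,x,u) w \rho\,\mathrm{d}t\,\mathrm{d}x$: the factor $w$ ranges over the infinite-dimensional $E_3$, where only $L^2(\Omega)$ control is available, so absorbing it cleanly relies crucially on the strict positivity of both the strong-convexity constant $\gamma$ from Lemma \ref{lem:3.3} and the spectral gaps $\delta$ and $e$. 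Equally essential is that the base point $u$ lies in the finite-dimensional $E_2$, which permits passing from $E$-norm smallness of $u$ to pointwise smallness and thereby activates the $o(|u|)$ behavior of $\tilde{f}$ near the origin.
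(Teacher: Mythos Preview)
Your argument is correct, and it takes a genuinely different route from the paper's. Both start from $\widetilde{\Phi}(u)\geq \min_{w\in E_3}\Phi(u+w)$, but diverge thereafter. The paper expands $\widetilde{F}(t,x,u+w)$ by a second-order Taylor identity \eqref{eqa:5.9}, bounds the Hessian term directly via $0\leq \partial_u\tilde f\leq e-\kappa$ and $\|w\|_{L^2}^2\leq e^{-1}\|w\|_E^2$, and controls the remaining $u$-terms using the global estimate $|\tilde f(u)|\leq \varepsilon|u|+C|u|^r$ (which needs both \eqref{eqa:1.2} and \eqref{eqa:1.3}), exploiting $\dim E_2<\infty$ through the norm equivalences $\|u\|_{L^{2r}}\lesssim\|u\|_E$ and $\|u\|_{L^{r+1}}\lesssim\|u\|_E$; this yields $\Phi(u+w)\geq \tfrac14\|u\|_E^2-C\|u\|_E^{r+1}-C\|u\|_E^{2r}$. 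You instead (i) reuse the already-established strong monotonicity of Lemma~\ref{lem:3.3} to obtain the quadratic lower bound in $w$ in one stroke, and (ii) exploit $\dim E_2<\infty$ through the stronger $L^\infty$ bound $\|u\|_{L^\infty}\leq C_2\|u\|_E$, which lets you work entirely with the local hypothesis \eqref{eqa:1.2} and never invoke \eqref{eqa:1.3}. Your route is shorter and recycles more of the existing machinery; the paper's route is more self-contained at this point and keeps the dependence on the spectral gap $e$ explicit through the Taylor computation. Either way the conclusion follows.
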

\begin{proof}
Firstly, for any $u \in E_2$, $w \in E_3$, by \eqref{eqa:2.9}, we have
\begin{eqnarray}\label{eqa:5.7}
\Phi(u +w)  = \frac{1}{2}\langle(L-a )(u +w), u +w\rangle - \int_\Omega \widetilde{F}(t,x, u +w) \rho\textrm{d}t \textrm{d}x.
\end{eqnarray}

Since $E_2$ and $E_3$ are orthogonal subspaces of $E$, from \eqref{eqa:3.2}, we obtain
\begin{eqnarray}\label{eqa:5.8}
\langle(L-a )(u +w), u +w\rangle = \|u \|^2_E + \|w \|^2_E.
\end{eqnarray}

In addition, it is easy to see
\begin{eqnarray*}
\int^1_0 \int^1_0 s \frac{\partial \tilde{f}}{\partial \xi} (u + s\theta w) w^2 \textrm{d} \theta \textrm{d} s &=& \int^1_0 w \tilde{f}(u + s w)\textrm{d}s  - \tilde{f}(u)w \\
&=&  \widetilde{F}(u + w) - \widetilde{F}(u ) -\tilde{f}(u)w,
\end{eqnarray*}
where $\tilde{f}(\xi) = \tilde{f}(t,x, \xi)$ and $\widetilde{F}(\xi) = \widetilde{F}(t,x, \xi)$.
Thus,
\begin{eqnarray}\label{eqa:5.9}
\widetilde{F}(u + w) = \int^1_0 \int^1_0 s \frac{\partial \tilde{f}}{\partial \xi} (u + s\theta w) w^2 \textrm{d} \theta \textrm{d}s + \tilde{f}(u)w + \widetilde{F}(u ).
\end{eqnarray}
In what follows, we estimate the upper bound of $\int_\Omega \widetilde{F}(u + w) \rho\textrm{d}t \textrm{d}x$. The equation \eqref{eqa:5.9} shows that it needs to estimate the upper bounds of the following three terms:
$$\int_\Omega \left(\int^1_0 \int^1_0 s \frac{\partial \tilde{f}}{\partial \xi} (u + s\theta w) w^2 \textrm{d} \theta \textrm{d}s \right) \rho\textrm{d}t \textrm{d}x, \ \int_\Omega \tilde{f}(u)w \rho\textrm{d}t \textrm{d}x, \ \int_\Omega \widetilde{F}(u) \rho\textrm{d}t \textrm{d}x.$$

(i)  For $w \in E_3$, the fact $|\lambda_{nm} - a|\geq e$ shows $\|w \|^2_{L^2(\Omega)} \leq \frac{1}{e} \|w \|^2_E$. Observing $0\leq \frac{\partial \tilde{f}}{\partial u}(t, x, u) \leq e - \kappa$, we obtain
\begin{eqnarray}\label{eqa:5.11}
0\leq \int_\Omega  \left(\int^1_0 \int^1_0 s \frac{\partial \tilde{f}}{\partial \xi} (u + s\theta w) w^2 \textrm{d} \theta \textrm{d}s \right) \rho\textrm{d}t \textrm{d}x  \leq \frac{e - \kappa}{2 e} \|w \|^2_E.
\end{eqnarray}

(ii) Fix $r > 1$, by the assumptions \eqref{eqa:1.2} and \eqref{eqa:1.3}, we obtain that for any $\varepsilon >0$, there exists a constant $C=C(\varepsilon, r) >0$ such that
\begin{eqnarray}\label{eqa:5.12}
|\tilde{f}(u)| \leq \varepsilon |u| + C |u|^r, \quad \forall \ (t, x, u) \in \Omega \times \mathbb{R}.
\end{eqnarray}
Therefore, a direct calculation yields
\begin{eqnarray*}
\Big| \int_\Omega \tilde{f}(u)w \rho\textrm{d}t \textrm{d}x \Big|
&\leq& \varepsilon \|u \|_{L^2(\Omega)} \|w \|_{L^2(\Omega)} + C \|u \|^r_{L^{2r}(\Omega)}\|w \|_{L^2(\Omega)} \nonumber\\
&\leq& \frac{\varepsilon}{2} \|u \|^2_{L^2(\Omega)}  + C \|u \|^{2r}_{L^{2r}(\Omega)} + \varepsilon\|w \|^2_{L^2(\Omega)}.
\end{eqnarray*}

Since $\dim(E_2) < \infty$, then all norms of $E_2$ are equivalent. Thus there exists a constant $C>0$ such that $\|u \|^{2r}_{L^{2r}(\Omega)} \leq C \|u \|^{2r}_E$. Since $\|w \|^2_{L^2(\Omega)} \leq \frac{1}{e} \|w \|^2_E$, by \eqref{eqa:2.7}, we have
\begin{eqnarray}\label{eqa:5.13}
\Big| \int_\Omega \tilde{f}(u)w \rho\textrm{d}t \textrm{d}x \Big| \leq \frac{\varepsilon}{2\delta} \|u \|^2_E  + C \|u \|^{2r}_E + \frac{\varepsilon}{e}\|w \|^2_E.
\end{eqnarray}

(iii) Since $\dim(E_2) < \infty$, there exists a constant $C>0$ such that $\|u \|^{r+1}_{L^{r+1}(\Omega)} \leq C \|u \|^{r+1}_E$. Thus, by \eqref{eqa:2.7} and \eqref{eqa:5.12}, a direct calculation yields
\begin{equation}\label{eqa:5.14}
\Big|\int_\Omega \widetilde{F}(u) \rho\textrm{d}t \textrm{d}x \Big|
\leq  \frac{\varepsilon}{2\delta} \|u \|^2_E + C \|u \|^{r+1}_E,
\end{equation}
for some constant $C$ depending on $\varepsilon$ and $r$.

Therefore, the sum of \eqref{eqa:5.11}, \eqref{eqa:5.13} and \eqref{eqa:5.14} yields
\begin{eqnarray}\label{eqa:5.15}
\Big|\int_\Omega \widetilde{F}(u + w)\rho\textrm{d}t \textrm{d}x \Big| &\leq& \frac{\varepsilon}{\delta} \|u \|^2_E + C \|u \|^{r+1}_E + C \|u \|^{2r}_E \nonumber \\
&&+ \frac{e - \kappa + 2\varepsilon}{2 e} \|w \|^2_E.
\end{eqnarray}

Consequently, inserting \eqref{eqa:5.8} and \eqref{eqa:5.15} into \eqref{eqa:5.7} and taking $\varepsilon = \min\{\frac{\delta}{4}, \frac{\kappa}{4}\}$, we have
\begin{eqnarray}\label{eqa:5.16}
\Phi(u +w) &\geq& (\frac{1}{2} - \frac{\varepsilon}{\delta}) \|u \|^2_E - C \|u \|^{r+1}_E - C \|u \|^{2r}_E + (\frac{1}{2} - \frac{e - \kappa + 2\varepsilon}{2 e}) \|w \|^2_E \nonumber\\
&=& (\frac{1}{2} - \frac{\varepsilon}{\delta}) \|u \|^2_E  - C \|u \|^{r+1}_E - C \|u \|^{2r}_E + ( \frac{ \kappa - 2\varepsilon}{2 e}) \|w \|^2_E.\nonumber\\
&\geq &  \frac{1}{4}\|u \|^2_E - C \|u \|^{r+1}_E - C \|u \|^{2r}_E.
\end{eqnarray}

Finally, since $r > 1$, then  $\phi(s)= \frac{1}{4}s^2 - C s^{r+1} - C s^{2r}$  attains the local minimum at $s = 0$. Therefore there exist  two constants $R_2>0$ and $\tau_2 > 0$ such that $\phi (R_2) \geq \tau_2$.
Since  $\widetilde{\Phi}(u)  \geq \min\limits_{w \in E_3}  \Phi(u + w)$, from \eqref{eqa:5.16}, we obtain
$\widetilde{\Phi}(u)  \geq \tau_2 $ with $\|u \|_E = R_2$.
\end{proof}

\section{Proof of Theorem \ref{th:2.1}}

\setcounter{equation}{0}
\label{sec:6}

With the above lemmas in hand, we give the proof of Theorem {\upshape\ref{th:2.1}}.

\begin{proof}
Let $R_2$ and $\tau_2$ be the constants in Lemma \ref{lem:5.3} and $B_{R_2} = \{u \in E_2 : \|u \|_E < R_2\}$.

Firstly, by Lemma \ref{lem:5.1}, we know $\widetilde{\Phi} $  is bounded from above. Let $\beta_1 = \sup\limits_{u \in E_2}\widetilde{\Phi}(u)$.  By Lemma \ref{lem:3.1} and Lemma \ref{lem:4.2}, $\widetilde{\Phi} $ satisfies $(PS)_{\beta_1}$ condition. Thus, there exists a critical point $u_1 \in  E_2$ such that $\widetilde{\Phi} (u_1)= \beta_1$ and $\widetilde{\Phi}' (u_1)= 0$.

Secondly, recall $\widetilde{F}(t, r, u)\geq 0$ for any $u \in E$, then for any $v \in E_1$, from \eqref{eqa:3.1}, we obtain
$$\Phi(v)  = \frac{1}{2}\langle(L-a)v, v\rangle - \int_\Omega \widetilde{F}(t,x, v) \rho\textrm{d}t \textrm{d}x\leq 0.$$
Therefore, we have
\begin{equation}\label{eqa:6.1}
\widetilde{\Phi}(0) = \min_{w \in E_3} \max_{v\in E_1} \Phi(v+w) \leq \max_{v\in E_1}  \Phi(v) \leq 0.
\end{equation}
Noting $0 \in B_{R_2}$ and taking $R_1 = R_2$ in Lemma \ref{lem:5.2}, in virtue of Lemma \ref{lem:5.2}, Lemma \ref{lem:5.3} and $\widetilde{\Phi}(0) \leq 0$, we obtain the reduction functional  $\widetilde{\Phi}$ attains its infimum in  $B_{R_2}$. Let $\beta_2 = \inf\limits_{u \in B_{R_2}}\widetilde{\Phi}(u)$, then Lemma \ref{lem:3.1} and Lemma \ref{lem:4.2} show that $\widetilde{\Phi}$ satisfies the $(PS)_{\beta_2}$ condition. Thus, there exists $u_2 \in  E_2$ such that $\widetilde{\Phi}' (u_2)= 0$ and $\widetilde{\Phi} (u_2)= \beta_2$.

Now we claim $u_1$ and $u_2$ are two different critical points in $E_2$. Observing  $0 \in B_{R_2}$, the following inequality
\begin{eqnarray}\label{eqa:6.2}
\beta_2=\inf\limits_{u \in B_{R_2}}\widetilde{\Phi}(u) \leq \widetilde{\Phi}(0) \leq 0 < \tau_2 \leq \inf\limits_{\|u\|_E=R_2}\widetilde{\Phi}(u)\leq \sup\limits_{u \in E_2}\widetilde{\Phi} (u)=\beta_1
\end{eqnarray}
shows $u_1 \neq u_2$.

If $\widetilde{\Phi}$ possesses another local maximum point which is different from $u_1$, then there exist at least three critical points of $\widetilde{\Phi}$.

Otherwise, if $u_1$ is the unique local maximum point of $\widetilde{\Phi}$. We shall prove that there exists a critical point of mountain-pass type which is different from $u_1$ and $u_2$.

Taking $u_0 \in E_2$ with $\|u_0\|_E = 1$, by Lemma \ref{lem:5.1}, there exists $\widetilde{R} > R_2$ such that $\widetilde{\Phi} (\widetilde{R} u_0) \leq 0$. Moreover, one of the fact holds: for any $s \in [0, 1]$,  either $s\widetilde{R} u_0 \neq u_1$ or $-s\widetilde{R} u_0 \neq u_1$.

\emph{Case 1}: if $s\widetilde{R} u_0 \neq u_1$ for any $s \in [0, 1]$, by Lemma \ref{lem:5.3} and \eqref{eqa:6.1}, we obtain
$$\max \{\widetilde{\Phi} (0), \widetilde{\Phi} (\widetilde{R} u_0)\} \leq 0 < \tau_2 \leq \inf\limits_{\|u\|_E= R_2} \widetilde{\Phi} (u).$$

Let
$$c^+ = \inf\limits_{g \in \Delta^+} \max\limits_{s \in [0, 1]} \widetilde{\Phi} (g(s)),$$
 where $\Delta^+ = \{g \in C([0, 1], E_2) : g(0)=0, g(1)=\widetilde{R} u_0\}$. In addition, we have  $\widetilde{\Phi}$ is $C^1$ function and satisfies $(PS)_{c^+}$ condition. By  mountain pass lemma \cite{{Chang.1986}}, we obtain  a critical point $u^+_3$ satisfying $\widetilde{\Phi} (u^+_3) = c^+$ and $\widetilde{\Phi}' (u^+_3) = 0$. Obviously, $c^+ \geq \tau_2 >0$.

It is easy to see that $s\widetilde{R} u_0 \in \Delta^+$ for any $s \in [0, 1]$. Therefore, the assumption $s\widetilde{R} u_0 \neq u_1$ implies
 $$c^+ \leq \max\limits_{s \in [0, 1]} \widetilde{\Phi} (s\widetilde{R} u_0) < \sup\limits_{u\in E_2}\widetilde{\Phi}(u)= \beta_1.$$
Thus, the following inequalities
$$\beta_2 = \inf\limits_{u \in B_{R_2}}\widetilde{\Phi}(u)\leq 0< \tau_2 \leq c^+  < \sup\limits_{u\in E_2}\widetilde{\Phi}(u) =\beta_1$$
shows $u_1\neq u_2\neq u^+_3$.

\emph{Case 2}: similarly, if $-s\widetilde{R} u_0 \neq u_1$ for any $s \in [0, 1]$, we have
$$c^- = \inf\limits_{g \in \Delta^-} \max\limits_{s \in [0, 1]} \widetilde{\Phi} (g(s))$$
is the critical value of $ \widetilde{\Phi}$, where
 $\Delta^- = \{g \in C([0, 1], E_2) : g(0)=0, g(1)=-\widetilde{R} u_0\}$. Therefore, there exists $u^-_3 \in E_2$ satisfying $ \widetilde{\Phi} (u^-_3) = c^-$ and $ \widetilde{\Phi}' (u^-_3) = 0$. Furthermore, we have $u_1\neq u_2\neq u^-_3$.

 Consequently, by Lemma \ref{lem:3.1}, we obtain  the  functional $\Phi$ has at least three critical points. The proof is completed.

\end{proof}

\vskip 5mm

{\bf Ethics statement.} This work did not involve any active collection of human data.

\vskip 5mm

{\bf Data accessibility statement.} This work does not have any experimental data.

\vskip 5mm

{\bf Competing interests statement.} We have no competing interests.

\vskip 5mm

{\bf Authors¡¯ contributions.}
HW and SJ contributed to the mathematical proof and writing the paper. All authors gave final approval for publication.

\vskip 5mm

{\bf Funding.} This work was supported by NSFC Grants (nos. 11322105 and 11671071).





\bibliographystyle{elsarticle-num}
\bibliography{<your-bib-database>}



\section*{References}

\end{document}